\newcommand\fs@spaceruled{\def\@fs@cfont{\bfseries}\let\@fs@capt\floatc@ruled
	\def\@fs@pre{\vspace{1.5mm}\hrule height.8pt depth0pt \kern2pt}%
	\def\@fs@post{\kern2pt\hrule\relax}%
	\def\@fs@mid{\kern2pt\hrule\kern2pt}%
	\let\@fs@iftopcapt\iftrue}
\newcommand\dvr[1]{{\color{blue}#1}}
\title{\LARGE \bf
Robust Differential Dynamic Programming
}
\author{Dennis Gramlich, Carsten W. Scherer, Christian Ebenbauer
\thanks{This work was supported by Sener Aerospacial under a program of, and funded by, the European Space Agency.}
\thanks{Dennis Gramlich and Christian Ebenbauer are with the Chair of Intelligent Control Systems,
        RWTH-Aachen,
        D-52074 Aachen, Germany
        {\tt\small \{dennis.gramlich,christian.ebenbauer\} @ic.rwth-aachen.de}}%
\thanks{Carsten W. Scherer is with the Chair of Mathematical Systems Theory,
    University of Stuttgart, 70174 Stuttgart, Germany
        {\tt\small carsten.scherer@mathematik.uni-stuttgart.de}}%
\thanks{Funded by Deutsche Forschungsgemeinschaft (DFG, German Research Foundation) under Germany's Excellence Strategy - EXC 2075 – 390740016. We acknowledge the support by the Stuttgart Center for Simulation Science (SimTech).}
}%
\begin{document}

\maketitle
\thispagestyle{empty}
\pagestyle{empty}

\begin{abstract}

Differential Dynamic Programming is an optimal control technique often used for trajectory generation. 
Many variations of this algorithm have been developed in the literature, including algorithms for  stochastic dynamics or state and input constraints.  In this contribution, we develop a robust version of Differential Dynamic Programming that uses generalized plants and multiplier relaxations for uncertainties. To this end, we study a version of the Bellman principle and use convex relaxations to account for uncertainties in the dynamic program. The resulting algorithm can be seen as a robust trajectory generation tool for nonlinear systems.

\end{abstract}

\section{INTRODUCTION}
\label{sec:1}

In this paper, we deal with the robust planning of trajectories for linear and nonlinear dynamical systems. Trajectory planning or feed-forward control is a research field probably at least as old as control engineering itself. It is thus surprising how rarely robustness of trajectory generation has been considered in the literature. We address this problem by combining Differential Dynamic Programming (DDP) methods with robust control. 

DDP has been developed in the 1960s \cite{OP_Mayne1966} as an approximate Dynamic Programming tool that bridges the gap between Dynamic Programming and local optimization methods for trajectory planning. 
Since the development of DDP, different extensions of these algorithms have emerged. This includes stochastic DDP \cite{theodorou2010stochastic,pan2014probabilistic,ozaki2018stochastic} for stochastic optimal control problems, DDP methods for constrained optimization problems \cite{OP_Pavlov2020} and newer extensions such as \emph{sampled Differential Dynamic Programming} \cite{rajamaki2016sampled}. In this article, we treat uncertainties in the Dynamic Program in a worst-case fashion, i.e., we maximize over the uncertainties to provide a robust version of DDP. This results in a minimax optimization problem in the Dynamic Program just like in \emph{minimax Differential Dynamic Programming} \cite{sun2018min}. Unlike minimax DDP, we make use of convex relaxation techniques from robust control such that our approach applies to (nonlinear) generalized plants with parametric or dynamic uncertainties and not just disturbances as in existing minimax DDP. This approach allows us to carry over the multiplier relaxation techniques from robust control to DDP. Indeed, an alternative perspective on our work is the development of robust control theory for time varying finite horizon control problems, as it is suggested in \cite{buch2021finite} for linear dynamical systems. Such finite horizon perspectives enable the design of robust time varying feedback controllers for nonlinear systems linearized around a reference trajectory, which is already of high value by itself in our opinion. Compared to \cite{buch2021finite}, we emphasize that our algorithm simultaneously optimizes a feedback term and the reference trajectory in a robust control fashion.

Most of the works dealing with robustness in trajectory generation have been conducted in Model Predictive Control (MPC). Despite being mostly concerned with fixed point stabilization, MPC can be viewed as control by sequential trajectory generation.
Inherent robustness properties of MPC have been studied in \cite{de1996robustness,scokaert1997discrete}, but for the general nonlinear case, non-robustness of nominal MPC has been reported e.g. in \cite{grimm2004examples}. Resulting from these investigations, robust tube-based MPC has been developed to address bounded disturbances \cite{mayne2005robust}. Further, robust control of nonlinear systems is studied in the tube-based MPC setting \cite{mayne2011tube}. Still, the fusion of MPC with classical multiplier relaxations from robust control remains an open issue. First steps into this direction (see \cite{schwenkel2020dynamic}) are still limited to fixed multipliers and linear systems. An alternative to tube-based MPC to be mentioned is minimax MPC \cite{campo1987robust}, where similar to our ideas, convex relaxations in the form of linear matrix inequalities are used to handle robust minimax optimization problems.

A major issue with approaches such as minimax MPC is computational complexity. Since convex optimization allows for more efficient solutions, it is therefore typically desirable to \emph{convexify} trajectory optimization and robust controller synthesis problems (compare \cite{OP_Acikmecse2013,OP_Leeman2021}). For this purpose, we leverage the S-procedure and dualization techniques from robust control to formulate our DDP iterations as LMI optimization problems.

Summarizing, we makes the following contributions.
 \begin{itemize}
     \item We combine DDP with robust control to address trajectory planning problems for systems in generalized plant form with parametric or dynamic uncertainties.
     \item We relax the maximization over the uncertainties, such that the solution approach is completely convex for linear systems and a sequential convex programming approach for nonlinear systems.
     \item We optimize over affine linear feedback policies, i.e., we simultaneously optimize (robustly) over a trajectory and a linear feedback around this trajectory.
 \end{itemize}
\section{Problem statement}
\label{sec:2}
Consider the following minimax optimization problem
\begin{align}
    \minimize_{(\pi_t(\cdot))} \maximize_{(\Delta_t(\cdot))\subseteq \ovl{\Delta}} ~&~ \sum_{t=0}^{T-1} f_0(x_t,u_t) + V_T(x_T) \label{eq:origOpt}\\
    \mathrm{subject~to} ~&~ \begin{pmatrix}
                x_{t+1}\\
                z_t\\
            \end{pmatrix}
            =
            \begin{pmatrix}
                    f(x_t,u_t, w_t)\\
                    g(x_t,u_t,w_t)
            \end{pmatrix}\nonumber\\
            &~ w_t = \Delta_t(z_t)\nonumber\\
            &~ u_t = \pi_t(x_t),\nonumber\\
            &~ x_0 = \bar{x} \nonumber
\end{align}
for a fixed initial condition $\bar{x}$, where the functions $f(\cdot), g(\cdot), f_0(\cdot)$ and $V_T(\cdot)$ are continuously differentiable and all constraints have to apply for $t = 0,\ldots, T-1$.
In the present paper, we interpret
\begin{align}
    \begin{pmatrix}
                    x_{t+1}\\
                    z_t\\
            \end{pmatrix}
            &=
            \begin{pmatrix}
                    f(x_t,u_t, w_t)\\
                    g(x_t,u_t,w_t)
            \end{pmatrix},
            &w_t = \Delta_t(z_t) \label{eq:interconnection}
\end{align}
as a nonlinear generalized plant with state $x_t \in \bbR^n$, control input $u_t \in \bbR^m$, disturbance input $w_t \in \bbR^d$ and output $z_t \in \bbR^l$. Alternatively, $w_t$ can be seen as the input of the adversarial uncertainty policy $\Delta_t$. Accordingly, $(\pi_t(\cdot))$ denotes the control policy consisting of the $T$ mappings $\pi_t: \bbR^n \to \bbR^m$ for $t = 0,\ldots , T-1$ and $(\Delta_t(\cdot))$ is an uncertain component consisting of $\Delta_t : \bbR^{l} \to \bbR^{d}$ for $t = 0,\ldots, T-1$, contained in a set $\ovl{\Delta}$ defined by
\begin{align*}
    \ovl{\Delta} = \{ \Delta : \bbR^{l} \to \bbR^{d} \mid \Delta(z) \in \calC(z) ~ \forall z \in \bbR^l \},
\end{align*}
where $0 \in \calC(z) \subsEq \bbR^{d}$ for all $z \in \bbR^{l}$. We assume that the interconnection \eqref{eq:interconnection} is well-posed, i.e., for any $x\in \bbR^n$, $u \in \bbR^m$ and $\Delta \in \ovl{\Delta}$, there exists exactly one $(w,z) \in \bbR^{d}\times\bbR^{l}$, such that $z = g(x,u,w), w = \Delta (z)$ is satisfied.

The aim of this paper is to provide a methodology for computing approximate solutions to the optimization problem \eqref{eq:origOpt} in terms of robust feedback policies $(\pi_t(\cdot))_{t=0}^{T-1}$. In the nominal case ($w_t = 0$ for all $t$), this problem is addressed by DDP. To address the robust optimization problem \eqref{eq:origOpt}, we extend DDP with techniques from robust control, i.e., we make use of convex relaxations to relax the maximization in problem \eqref{eq:origOpt}. In this course, we derive a new methodology for computing policies to optimize the robust performance in \eqref{eq:origOpt}.

\section{Robust Differential Dynamic Programming}
\label{sec:3}

In the following, we show how approximate solutions to problem \eqref{eq:origOpt} can be obtained. To this end, we follow the lines of DDP to derive a sequential convex programming procedure. To facilitate the distinction between decision variables (and those quantities containing decision variables) and constants in the subsequent derivations and convexifications, we highlight decision variables in blue in the following two sections. This excludes the variables $(x,u,w), (x_t,u_t,w_t), (\delta_t^x,\delta_t^u,\delta_t^w)$, since these are eliminated in the derivation. 
A solution to \eqref{eq:origOpt} can be obtained from a solution $(Q_t(\cdot,\cdot),V_t(\cdot),\pi_t(\cdot))$ of the Bellman equation/Dynamic Programming Recursion
\begin{align*}
    Q_t(x,u,w) &= f_0(x,u) + V_{t+1}(f(x,u,w)),\\
    \pi_t(x) &\in \argmin_{u \in \bbR^m} \max_{\substack{\Delta_t \in \ovl{\Delta}\\
    w = \Delta_t(g(x,u,w))}} Q_t(x,u,w),\\
    V_t(x) &= \max_{\substack{\Delta_t \in \ovl{\Delta}\\
    w = \Delta_t(g(x, \pi_t(x),w))}} Q_t(x,\pi_t(x),w),
\end{align*}
for $t = 0,\ldots,T-1$. DDP approximates this process by alternately computing quadratic approximate solutions of the Bellman equation around a trajectory (backward pass) and computing new trajectories (forward pass).

\textbf{Backward pass:} For $t = T-1,\ldots,0$ and $\widetilde{V}_{T}(x) = V_T(x)$ the following steps are executed:
\begin{compactitem}
    \item Computation of a second-order approximation of the $Q$-function around the trajectory $(x_t,u_t,w_t)_{t=0}^T$:
    \begin{align}
        f_0(x, u) + \widetilde{V}_{t+1}(f(x, u, w)) \approx
        \widetilde{Q}_t(x, u, w) \nonumber\\
        =:
        \begin{pmatrix}
            1\\ \delta_t^x\\ \delta_t^u\\ \delta_t^w
        \end{pmatrix}^\top
        \begin{pmatrix}
            q_t^{11} & q_t^{12} & q_t^{13} & q_t^{14}\\
            q_t^{21} & \bQ_t^{22} & \bQ_t^{23} & \bQ_t^{24}\\
            q_t^{31} & \bQ_t^{32} & \bQ_t^{33} & \bQ_t^{34}\\
            q_t^{41} & \bQ_t^{42} & \bQ_t^{43} & \bQ_t^{44}
        \end{pmatrix}
        \begin{pmatrix}
            1\\ \delta_t^x\\ \delta_t^u\\ \delta_t^w
        \end{pmatrix}, \label{eq:quadraticQ}
    \end{align}
    where $\delta_t^x = x - x_t$, $\delta_t^u = u - u_t$, $\delta_t^w = w - w_t$.
    \item Computation of the policy $(\tilde{\pi}_t)_{t=0}^{T-1}$ and value function approximation $(\widetilde{V}_t)_{t=0}^{T-1}$:
    \begin{align}
        \tilde{\pi}_t(x) &:= u_t + \dvr{k_t^1} + \dvr{\bK_t^2}\delta_t^x \label{eq:optimizePolicy}\\
        &\approx \argmin_{u \in \bbR^n} \max_{\substack{\Delta_t \in \ovl{\Delta}\\
    w = \Delta_t(g(x,u,w))}} \widetilde{Q}_t(x,u,w), \nonumber\\
        \widetilde{V}_t(x) &
        := \max_{\substack{\Delta_t \in \ovl{\Delta}\\
        		w = \Delta_t(g(x,\pi_t(x),w))}}
        	\widetilde{Q}_t(x,\pi_t(x),w)\nonumber\\
        &\approx \begin{pmatrix}
                1\\
                \delta_t^x
            \end{pmatrix}^\top
            \begin{pmatrix}
                \dvr{p_t^{11}} & \dvr{p_t^{12}}\\
                \dvr{p_t^{21}} & \dvr{\bP_t^{22}}
            \end{pmatrix} \begin{pmatrix}
                1\\
                \delta_t^x
            \end{pmatrix}. \label{eq:valueFcnApprox}
    \end{align}
    Here, we approximate the ideal policy $\pi_t(\cdot)$ by the affine linear policy $\tilde{\pi}_t(\cdot)$.
\end{compactitem}
\textbf{Forward pass:} For the given initial value $x_0 = \bar{x}$ and $t = 0,\ldots,T-1$, the following steps are executed:
\begin{compactitem}
    \item A new trajectory $(x_t^+,u_t^+,w_t^+)_{t=0}^{T-1}$ is evaluated based on the new policy $(\tilde{\pi}_t(\cdot))_{t =0}^{T-1}$:
    \begin{align*}
        x_{t+1}^{+} = f(x_t^+,u_t^+,w_t^+), \quad u_t^+ = \tilde{\pi}_t(x_t^+), \quad w_t^+ = 0.
    \end{align*}
\end{compactitem}
Notice that we set $w_t^+ = 0$ in the forward pass, while we consider worst-case disturbances in the backward pass. This is because the backward pass has a robustifying effect (Theorem \ref{thm:robustnessStep}), whereas the forward pass determines the trajectory around which we approximate the $Q$-function. Here, it makes sense to choose the value $w_t$, yielding the best approximation $\widetilde{Q}_t(\cdot)$, which we assume to be the nominal value $w_t = 0$. In general, $w_t \in \calC(z_t)$ could be chosen in a way that minimizes the maximum distance to other points in $\calC(z_t)$.

Stated as above, DDP is a conceptual scheme rather than a ready-to-implement algorithm, since the details of how the $Q$-function is approximated by $\widetilde{Q}$ and how the min-max policy $\tilde{\pi}_t(\cdot)$ is calculated remain unspecified. Possibilities to compute $\widetilde{Q}$, such as second order Taylor approximation, are discussed in Section \ref{sec:5}. Now, we show how the maximization over the constraint set $\Delta_t \in \ovl{\Delta}$ is carried out. To this end, we make use of the celebrated S-procedure 
to recast \eqref{eq:optimizePolicy} as an LMI optimization problem.

To employ the S-procedure, we assume that the signals $\delta_t^x,\delta_t^w$ and $\delta_t^u$ related through
        \begin{align}
            w_t + \delta_t^w &= \Delta_t(g(x_t + \delta_t^x,u_t+\delta_t^u,w_t + \delta_t^w)) & \Delta_t(\cdot) \in \ovl{\Delta} \label{eq:disturbanceConst}
        \end{align}
        satisfy the quadratic inequality
        \begin{align}
            \begin{pmatrix}
            1\\ \delta_t^x\\ \delta_t^u\\ \delta_t^w
        \end{pmatrix}^\top
        \begin{pmatrix}
            \dvr{m_t^{11}} & \dvr{m_t^{12}} & \dvr{m_t^{13}} & \dvr{m_t^{14}}\\
            \dvr{m_t^{21}} & \dvr{\bM_t^{22}} & \dvr{\bM_t^{23}} & \dvr{\bM_t^{24}}\\
            \dvr{m_t^{31}} & \dvr{\bM_t^{32}} & \dvr{\bM_t^{33}} & \dvr{\bM_t^{34}}\\
            \dvr{m_t^{41}} & \dvr{\bM_t^{42}} & \dvr{\bM_t^{43}} & \dvr{\bM_t^{44}}
        \end{pmatrix}
        \begin{pmatrix}
            1\\ \delta_t^x\\ \delta_t^u\\ \delta_t^w
        \end{pmatrix}
        \geq 0 \label{eq:conicConst}
        \end{align}
        for a family of matrices $\dvr{\bM_t} \in \calM$. Stated otherwise, we assume \eqref{eq:conicConst} is a relaxation of \eqref{eq:disturbanceConst}. For the remainder of this work, we assume that $\calM$ is of the generic conic form
        \begin{align}
            \calM &= \left\{ \sum_{i=1}^s \dvr{\lambda_i} \bM^{(i)} \mid \dvr{\lambda_i} \in \bbR_{\geq 0} \right\}
            \label{eq:genericMultipliers}
        \end{align}
    	with given symmetric $\bM^{(i)}$, $i = 1,\ldots , s$ and $s \in \bbN$.
        Many of the multipliers applied in robust control admit the form \eqref{eq:genericMultipliers}. We call the matrices $\bM^{(1)}, \ldots , \bM^{(s)}$ the generators of $\calM$. Clearly, with this definition, \eqref{eq:conicConst} can equivalently be demanded only on the set of generators of $\calM$, such that \eqref{eq:conicConst} boils down to a finite number of quadratic constraints.
        Therefore, we can apply standard semi-definite relaxations for non-convex quadratic programming to handle the maximization problem. Hence, we relax the maximization problem
        \begin{align*}
            \maximize_{\delta_t^w} \widetilde{Q}_t(x,u,w) ~~~ \mathrm{s.t.}~ \eqref{eq:conicConst}
        \end{align*}
        with its primal dual formulation
        {\footnotesize\begin{align}
            \minimize_{\lambda_i \geq 0}\maximize_{\delta_t^w} \widetilde{Q}_t(x,u,w) +
            \sum_{i = 1}^s
        \begin{pmatrix}
            1\\ \delta_t^x\\ \delta_t^u\\ \delta_t^w
        \end{pmatrix}^\top
        \dvr{\lambda_i}
        \bM^{(i)}
        \begin{pmatrix}
            1\\ \delta_t^x\\ \delta_t^u\\ \delta_t^w
        \end{pmatrix}.
    \label{eq:worstCaseQ}
        \end{align}}
        Since the maximization problem is a non-convex quadratic program, there can be a duality gap in this step. Notice that we obtain then an upper bound on the worst-case value of $\widetilde{Q}_t$, i.e., we overestimate the worst-case. 
        This overestimation is now used to compute the approximate value function $\widetilde{V}_t(\cdot)$ by demanding 
        \begin{align}
            \widetilde{V}_t(x) \geq \minimize_{\dvr{\bM_t} \in \calM,\delta_t^u} \maximize_{\delta_t^w} \ovl{Q}_t(x,u,w),
            \label{eq:constrainedBellman}
        \end{align}
        where
        \begin{align*}
        \ovl{Q}_t(x,u,w) = 
            \begin{pmatrix}
            1\\ \delta_t^x\\ \delta_t^u\\ \delta_t^w
        \end{pmatrix}^\top
        (\bQ_t +\dvr{\bM_t})
        \begin{pmatrix}
            1\\ \delta_t^x\\ \delta_t^u\\ \delta_t^w
        \end{pmatrix}.
        \end{align*} 
        In \eqref{eq:constrainedBellman}, we require $\widetilde{V}_t(\cdot)$ to be larger or equal to \eqref{eq:worstCaseQ} to make sure that $\widetilde{V}_t(\cdot)$ overestimates the worst-case costs. This overestimation is easily confirmed since the term involving $\dvr{\bM_t}$ is positive for all admissible $\delta_t^x,\delta_t^u,\delta_t^w$ due to \eqref{eq:conicConst} and removing the constrained \eqref{eq:disturbanceConst} can only increase the costs.
        The gain of this reformulation is that we eliminate the constraints imposed by \eqref{eq:disturbanceConst}.
        Substituting a quadratic function parametrized by $\dvr{\bP_t}$ for $\widetilde{V}_t(\cdot)$ yields
        \begin{align}
            &0 \geq \minimize_{\dvr{\bM_t} \in \calM,\delta_t^u} \maximize_{\delta_t^w}
            \begin{pmatrix}
            1\\ \delta_t^x\\ \delta_t^u\\ \delta_t^w
        \end{pmatrix}^\top
        \underbrace{(\bQ_t + \dvr{\bM}_t)}_{:= \dvr{\ovl{\bQ}_t}}
        \begin{pmatrix}
            1\\ \delta_t^x\\ \delta_t^u\\ \delta_t^w
        \end{pmatrix} \nonumber\\
        & \hspace{29mm}
        -
        \begin{pmatrix}
                1\\
                \delta_t^x
            \end{pmatrix}^\top
            \begin{pmatrix}
                \dvr{p_t^{11}} & \dvr{p_t^{12}}\\
                \dvr{p_t^{21}} & \dvr{\bP_t^{22}}
            \end{pmatrix} \begin{pmatrix}
                1\\
                \delta_t^x
            \end{pmatrix}. \label{eq:nonConvexMinMax}
        \end{align}
The sequence of steps, which we have just performed, is a variant of the S-procedure in robust control or Lagrange relaxation in Optimization. As we have stressed, Lagrange relaxation can be a conservative estimate and so can be the S-procedure.
        
        In \eqref{eq:nonConvexMinMax}, we require the inequality to be satisfied for all $\delta_t^x$, while taking the minimum over $\delta_t^u$ and the maximum over $\delta_t^w$. Fortunately, \eqref{eq:nonConvexMinMax} is quadratic in $\delta_t^u$ and $\delta_t^w$, such that we can solve the min-maximization analytically.
        
        If $\bQ_t^{44} + \dvr{\bM_t^{44}}$ is not negative semi-definite, then the solution for $\delta_t^w$ is unbounded. 
        If $\bQ_t^{44} + \dvr{\bM_t^{44}}\prec 0$, we get
        \begin{align}
            \delta_t^w = -(\dvr{\ovl{\bQ}_t^{44}})^{-1}\begin{pmatrix}
                \dvr{\bar{q}_t^{41}} & \dvr{\ovl{\bQ}_t^{42}} & \dvr{\ovl{\bQ}_t^{43}}
            \end{pmatrix}
            \begin{pmatrix}
                1\\
                \delta_t^x\\
                \delta_t^u
            \end{pmatrix}
            =: \widetilde{\Delta}_t(\delta_t^x,\delta_t^u).
            \label{eq:approxUncertainty}
        \end{align}
        The function $\widetilde{\Delta}_t$ can be seen as an approximate worst-case policy. Note that it may not satisfy the constraint $\Delta_t(\cdot) \in \ovl{\Delta}$ and that it is a worst-case with respect to the approximate $Q$-function given by $\bQ_t$ and not the true $Q$-function or the nonlinear dynamics $f$. For this reason, $\widetilde{\Delta}_t$ yields an upper bound on the worst-case performance if used with $\widetilde{Q}_t$, but not necessarily with the true dynamics $f$.
        Plugging \eqref{eq:approxUncertainty} into the Bellman inequality \eqref{eq:nonConvexMinMax} yields
        \begin{align*}
            0 \geq \minimize_{\dvr{\bM_t} \in \calM, \delta_t^u}
            \begin{pmatrix}
            1\\ \delta_t^x\\ \delta_t^u
        \end{pmatrix}^\top
        \left(
        \begin{pmatrix}
            \dvr{\bar{q}_t^{11}} & \dvr{\bar{q}_t^{12}} & \dvr{\bar{q}_t^{13}}\\
            \dvr{\bar{q}_t^{21}} & \dvr{\ovl{\bQ}_t^{22}} & \dvr{\ovl{\bQ}_t^{23}}\\
            \dvr{\bar{q}_t^{31}} & \dvr{\ovl{\bQ}_t^{32}} & \dvr{\ovl{\bQ}_t^{33}}
        \end{pmatrix}
        -
        \begin{pmatrix}
            \dvr{\bar{q}_t^{14}}\\
            \dvr{\ovl{\bQ}_t^{24}}\\
            \dvr{\ovl{\bQ}_t^{34}}
        \end{pmatrix}
        \right.\\
        \resizebox{\linewidth}{!}{$
        \left.
        (\dvr{\ovl{\bQ}_t^{44}})^{-1}
        \begin{pmatrix}
            \dvr{\bar{q}_t^{14}}\\
            \dvr{\ovl{\bQ}_t^{24}}\\
            \dvr{\ovl{\bQ}_t^{34}}
        \end{pmatrix}^\top
        \right)
        \begin{pmatrix}
            1\\ \delta_t^x\\ \delta_t^u
        \end{pmatrix}
        -
        \begin{pmatrix}
                1\\
                \delta_t^x
            \end{pmatrix}^\top
            \begin{pmatrix}
                \dvr{p_t^{11}} & \dvr{p_t^{12}}\\
                \dvr{p_t^{21}} & \dvr{\bP_t^{22}}
            \end{pmatrix} \begin{pmatrix}
                1\\
                \delta_t^x
            \end{pmatrix}$.}
        \end{align*}
        Further, if $\dvr{\ovl{\bQ}_t^{33}} - \dvr{\ovl{\bQ}_t^{34}}(\dvr{\ovl{\bQ}_t^{44}})^{-1}\dvr{\ovl{\bQ}_t^{43}}$ is positive definite, then also $\delta_t^u$ can be computed as
        \begin{align*}
            \delta_t^u = \begin{pmatrix}
                \dvr{k_t^1} & \dvr{\bK_t^2}
            \end{pmatrix}
            \begin{pmatrix}
                1\\ \delta_t^x
            \end{pmatrix}
            = \dvr{\bK_t} \begin{pmatrix}
                1\\ \delta_t^x
            \end{pmatrix}.
        \end{align*}
        
        We finally obtain the requirement that
        \begin{align*}
        \resizebox{\linewidth}{!}{$
        \begin{pmatrix}
                1\\
                \delta_t^x
            \end{pmatrix}^\top
        \left(
        \begin{pmatrix}
            1 & 0\\
            0 & \bI\\
            \dvr{k_t^1} & \dvr{\bK_t^2}
        \end{pmatrix}^\top
        \left(
        \begin{pmatrix}
            \dvr{\bar{q}_t^{11}} & \dvr{\bar{q}_t^{12}} & \dvr{\bar{q}_t^{13}}\\
            \dvr{\bar{q}_t^{21}} & \dvr{\ovl{\bQ}_t^{22}} & \dvr{\ovl{\bQ}_t^{23}}\\
            \dvr{\bar{q}_t^{31}} & \dvr{\ovl{\bQ}_t^{32}} & \dvr{\ovl{\bQ}_t^{33}}
        \end{pmatrix}
        -
        \begin{pmatrix}
            \dvr{\bar{q}_t^{14}}\\
            \dvr{\ovl{\bQ}_t^{24}}\\
            \dvr{\ovl{\bQ}_t^{34}}
        \end{pmatrix}
        \right. \right.$}\\
        \resizebox{\linewidth}{!}{$
        \left. \left.
        (\dvr{\ovl{\bQ}_t^{44}})^{-1}
        \begin{pmatrix}
            \dvr{\bar{q}_t^{14}}\\
            \dvr{\ovl{\bQ}_t^{24}}\\
            \dvr{\ovl{\bQ}_t^{34}}
        \end{pmatrix}^\top
        \right)
        \begin{pmatrix}
            1 & 0\\
            0 & \bI\\
            \dvr{k_t^1} & \dvr{\bK_t^2}
        \end{pmatrix}
        -
            \begin{pmatrix}
                \dvr{p_t^{11}} & \dvr{p_t^{12}}\\
                \dvr{p_t^{21}} & \dvr{\bP_t^{22}}
            \end{pmatrix}
            \right)\begin{pmatrix}
                1\\
                \delta_t^x
            \end{pmatrix}$}
        \end{align*}
        must be smaller or equal to zero for all $\delta_t^x$. This is implied by the matrix inequality constraint
        \begin{align*}
            \begin{pmatrix}
            1 & 0\\
            0 & \bI\\
            \dvr{k_t^1} & \dvr{\bK_t^2}
        \end{pmatrix}^\top
        \left(
        \begin{pmatrix}
            \dvr{\bar{q}_t^{11}} - \dvr{p_t^{11}} & \dvr{\bar{q}_t^{12}}-\dvr{p_t^{12}} & \dvr{\bar{q}_t^{13}}\\
            \dvr{\bar{q}_t^{21}}-\dvr{p_t^{21}} & \dvr{\ovl{\bQ}_t^{22}}-\dvr{\bP_t^{22}} & \dvr{\ovl{\bQ}_t^{23}}\\
            \dvr{\bar{q}_t^{31}} & \dvr{\ovl{\bQ}_t^{32}} & \dvr{\ovl{\bQ}_t^{33}}
        \end{pmatrix} \right.\\
        \left.
        -
        \begin{pmatrix}
            \dvr{\bar{q}_t^{14}}\\
            \dvr{\ovl{\bQ}_t^{24}}\\
            \dvr{\ovl{\bQ}_t^{34}}
        \end{pmatrix}
        (\dvr{\ovl{\bQ}_t^{44}})^{-1}
        \begin{pmatrix}
            \dvr{(\bar{q}_t^{14}}\\
            \dvr{\ovl{\bQ}_t^{24}}\\
            \dvr{\ovl{\bQ}_t^{34}}
        \end{pmatrix}^\top
        \right)
        \begin{pmatrix}
            1 & 0\\
            0 & \bI\\
            \dvr{k_t^1} & \dvr{\bK_t^2}
        \end{pmatrix}\prec 0,
        \end{align*}
        and we further require $\dvr{\ovl{\bQ}_t^{33}} - \dvr{\ovl{\bQ}_t^{34}}(\dvr{\ovl{\bQ}_t^{44}})^{-1}\dvr{\ovl{\bQ}_t^{43}} \succ 0$ and $\dvr{\ovl{\bQ}_t^{44}} \prec 0$ due to the derivation of this inequality. By virtue of the Schur complement, we express the first and the third inequality compactly as
        \begin{align}
        &\resizebox{\linewidth}{!}{$
        (\star)^\top
        \begin{pmatrix}
            \dvr{\bar{q}_t^{11}} - \dvr{p_t^{11}} & \dvr{\bar{q}_t^{12}}-\dvr{p_t^{12}} & \dvr{\bar{q}_t^{13}} & \dvr{\bar{q}_t^{14}}\\
            \dvr{\bar{q}_t^{21}}-\dvr{p_t^{21}} & \dvr{\ovl{\bQ}_t^{22}}-\dvr{\bP_t^{22}} & \dvr{\ovl{\bQ}_t^{23}} & \dvr{\ovl{\bQ}_t^{24}}\\
            \dvr{\bar{q}_t^{31}} & \dvr{\ovl{\bQ}_t^{32}} & \dvr{\ovl{\bQ}_t^{33}} & \dvr{\ovl{\bQ}_t^{34}}\\
            \dvr{\bar{q}_t^{41}} & \dvr{\ovl{\bQ}_t^{42}} & \dvr{\ovl{\bQ}_t^{43}} & \dvr{\ovl{\bQ}_t^{44}}
        \end{pmatrix}
        \begin{pmatrix}
            1 & 0 & 0\\
            0 & \bI & 0\\
            \dvr{k_t^1} & \dvr{\bK_t^2} & 0\\
            0 & 0 & \bI
        \end{pmatrix}$} \nonumber\\
        & \hspace{68mm}\prec 0.\label{eq:backwardMatrixInequality}
        \end{align}

Solving the matrix inequality \eqref{eq:backwardMatrixInequality} is at the center of the backward pass we propose. Due to our derivations, this matrix inequality guarantees that our value function approximations $\widetilde{V}_t(\cdot)$ are upper bounds on the value functions $V_{t,(\tilde{\pi}_t)}(\cdot)$ of the policies we optimize over and, thereby, also upper bounds on the optimal value function. To obtain the best upper bound, the proposed robust DDP procedure includes the minimization of this upper bound, i.e., the optimization of $\dvr{\bP_t}$ for some positive semi-definite matrix $\bSigma_t$ as follows:
\begin{align}
        \minimize_{\dvr{\bM_t} \in \calM,\dvr{\bP_t},\dvr{\bK_t}} ~&~ \trace \bSigma_t \dvr{\bP_t}. \label{eq:backwardMatrixInequality2}\\
        \mathrm{subject~to} ~&~ \eqref{eq:backwardMatrixInequality}\nonumber
\end{align}
This robust DDP method is summarized in Algorithm~\ref{alg:1}.

\begin{algorithm}
    \caption{Robust DDP}\label{alg:1}
    \begin{algorithmic}
        \State \textbf{Require:} $x_0$ and $\widetilde{V}_T(\cdot) \geq V_T(\cdot)$
        \While{$\|(\delta_t^x)\|_{\ell_2} \geq \varepsilon$}
        \State \emph{Backward Pass}
        \For{$t = T-1,\ldots,0$}
        \State Compute $\widetilde{Q}_t(\cdot)$ s.t. $\widetilde{Q}_t(\cdot) \geq  f_0(\cdot) + \widetilde{V}_{t+1}(f(\cdot))$
        \State Solve \eqref{eq:backwardMatrixInequality2} for $\widetilde{V}_t$ and $\tilde{\pi}_t$
        \EndFor
        \State \emph{Forward Pass}
        \For{$t = 0,\ldots,T-1$}
        \State $w_t \gets 0$, $u_t \gets \tilde{\pi}_t(x_t)$
        \State $x_{k+1} \gets f(x_t,u_t,w_t)$
        \State $z_t \gets g(x_t,u_t,w_t)$
        \EndFor
		\EndWhile
	\end{algorithmic}
\end{algorithm}

We stress that, due to the robust treatment of the nonlinearity, no step size control is needed in Algorithm~\ref{alg:1}. However, such measures become relevant when we relax the requirement of robustly treating the nonlinearity (see Section \ref{sec:5}). Policies computed according to Algorithm \ref{alg:1} are robust in the sense of the next theorem.

\begin{theorem}[Robust DDP]
    \label{thm:robustnessStep}
    Assume that $\widetilde{V}_{t+1}(\cdot)$ is an upper bound on $V_{t+1}(\cdot)$ and $\widetilde{Q}_t(\cdot)$ satisfies
    \begin{align}
        \widetilde{Q}_t(x,u,w) \geq f_0(x,u) + \widetilde{V}_{t+1}(f(x,u,w)) \label{eq:upperBounding}
    \end{align}
    for all $x \in \bbR^n, u \in \bbR^m, w \in \bbR^{d}$. If there exist $\dvr{\bM_t} \in \calM, \dvr{\bP_t} \succ 0$ and $\dvr{\bK_t}$ satisfying \eqref{eq:backwardMatrixInequality}, then the function
    \begin{align*}
        \widetilde{V}_t(x) = \begin{pmatrix}
            1\\
            \delta_t^x
        \end{pmatrix}^\top
        \dvr{\bP_t}
        \begin{pmatrix}
            1\\
            \delta_t^x
        \end{pmatrix}, \qquad \delta_t^x = x - x_t
    \end{align*}
    is an upper bound on the true value function $V_t(\cdot)$ of \eqref{eq:origOpt}.
\end{theorem}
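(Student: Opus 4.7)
The idea is to read the LMI \eqref{eq:backwardMatrixInequality} as a pointwise quadratic upper bound on the relaxed $Q$-function $\ovl{Q}_t$, and then chain this bound with the S-procedure relaxation \eqref{eq:conicConst}, the domination hypothesis \eqref{eq:upperBounding}, and the suboptimality of the closed-loop policy $\tilde{\pi}_t$, so as to compare the result with the true optimal $V_t$. No induction machinery is needed: the statement is a single backward step, and the hypothesis $\widetilde{V}_{t+1}(\cdot)\geq V_{t+1}(\cdot)$ plays the role of the induction hypothesis one would use when the algorithm is iterated.

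First, I would undo the Schur complement that produced \eqref{eq:backwardMatrixInequality}. Because the $(3,3)$ block of the outer-transformed matrix in \eqref{eq:backwardMatrixInequality} coincides with $\dvr{\ovl{\bQ}_t^{44}}$, the strict matrix inequality automatically forces $\dvr{\ovl{\bQ}_t^{44}}\prec 0$, so the Schur-complement equivalence used in the derivation is reversible. Unfolding it shows that \eqref{eq:backwardMatrixInequality} is equivalent to the pointwise quadratic bound
\begin{equation*}
\ovl{Q}_t\bigl(x,\tilde{\pi}_t(x),w\bigr) \;\leq\; \widetilde{V}_t(x) \qquad \text{for all } x\in\bbR^n,\; w\in\bbR^{d},
\end{equation*}
where $\tilde{\pi}_t(x) = u_t+\dvr{k_t^1}+\dvr{\bK_t^2}(x-x_t)$ and $\ovl{Q}_t$ is the quadratic form with kernel $\bQ_t+\dvr{\bM_t}$. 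This is the only step that actually invokes the LMI.

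Second, chain the inequalities. Fix $x$ and any admissible worst-case realization, i.e.\ any $\Delta_t\in\ovl{\Delta}$ and any $w$ with $w=\Delta_t(g(x,\tilde{\pi}_t(x),w))$. By \eqref{eq:conicConst} the multiplier contribution $(\cdot)^\top \dvr{\bM_t}(\cdot)$ is non-negative, so $\widetilde{Q}_t(x,\tilde{\pi}_t(x),w) \leq \ovl{Q}_t(x,\tilde{\pi}_t(x),w)$. Combining with the assumption \eqref{eq:upperBounding} and the hypothesis $\widetilde{V}_{t+1}\geq V_{t+1}$ gives
\begin{equation*}
Q_t(x,\tilde{\pi}_t(x),w) \;\leq\; f_0(x,\tilde{\pi}_t(x))+\widetilde{V}_{t+1}(f(x,\tilde{\pi}_t(x),w)) \;\leq\; \widetilde{Q}_t(x,\tilde{\pi}_t(x),w) \;\leq\; \widetilde{V}_t(x).
\end{equation*}
Maximizing the left-hand side over all admissible $\Delta_t\in\ovl{\Delta}$ and then using that $V_t(x)$ is defined as the infimum over all policies of $\max_{\Delta_t} Q_t(x,\cdot,w)$ yields $V_t(x)\leq \widetilde{V}_t(x)$, which is the claim.

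The single real obstacle is the Schur-complement reversal in step one, where block signs and dimensions must be tracked carefully; since the paper's derivation already performs the forward direction, the proof essentially amounts to reading that derivation backwards. The remaining arguments are direct substitutions, and well-posedness of the interconnection \eqref{eq:interconnection} ensures that the set of admissible $w$ over which the final maximum is taken is exactly the image of $\ovl{\Delta}$ under the feedback loop.
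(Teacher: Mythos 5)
Your proposal is correct and follows essentially the same route as the paper, whose proof is literally the instruction to retrace the derivation from \eqref{eq:disturbanceConst} to \eqref{eq:backwardMatrixInequality}: you unfold the LMI into the pointwise bound $\ovl{Q}_t(x,\tilde{\pi}_t(x),w)\leq \widetilde{V}_t(x)$, drop the nonnegative multiplier term on admissible $w$ via \eqref{eq:conicConst}, and chain with \eqref{eq:upperBounding}, the hypothesis $\widetilde{V}_{t+1}\geq V_{t+1}$, and suboptimality of $\tilde{\pi}_t$ in the min. The only quibble is that you claim \emph{equivalence} between \eqref{eq:backwardMatrixInequality} and the pointwise bound, whereas only the (easy) implication from the strict matrix inequality to the bound on vectors with leading entry $1$ is needed or true without further homogenization arguments; this does not affect the validity of the proof.
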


\begin{proof}
    Follow the arguments from \eqref{eq:disturbanceConst} to \eqref{eq:backwardMatrixInequality}.
\end{proof}

\begin{corollary}
    \label{cor:robustnessDDP}
    The sequence of approximate value functions $\widetilde{V}_t(\cdot)$ generated by Algorithm \ref{alg:1} is an upper bound for the value function $V_{t,(\tilde{\pi}_t)}(\cdot)$ of the generated policy $\tilde{\pi}_t$ and for the optimal value function $V_t(\cdot)$:
    \begin{align*}
        \widetilde{V}_t(x) &\geq V_{t,(\tilde{\pi}_t)}(x) \geq V_t(x) & \forall x \in \bbR^n, t=0,\ldots,T.
    \end{align*}
\end{corollary}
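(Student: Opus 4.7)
The plan is to prove both inequalities in the chain $\widetilde{V}_t(x) \geq V_{t,(\tilde{\pi}_t)}(x) \geq V_t(x)$ by backward induction on $t$, descending from $T$ to $0$. The second (inner) inequality is essentially definitional, since $V_t$ is the infimum over policies of the worst-case cost-to-go, whereas $V_{t,(\tilde{\pi}_t)}$ is obtained by restricting to the specific policy produced by the algorithm; so I would dispose of it in a single sentence. The real work is in the outer inequality $\widetilde{V}_t \geq V_{t,(\tilde{\pi}_t)}$, which is what the induction must carry.

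Base case $t=T$: Algorithm~\ref{alg:1} requires $\widetilde{V}_T \geq V_T$, and since no further decisions are made at the terminal time, $V_{T,(\tilde{\pi}_t)}(x)=V_T(x)$, so the chain holds trivially.

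Inductive step: Assume the chain holds at time $t+1$. In particular, $\widetilde{V}_{t+1}(x)\geq V_{t+1,(\tilde{\pi}_t)}(x)$. In the backward pass of the algorithm, $\widetilde{Q}_t$ is constructed to satisfy \eqref{eq:upperBounding} with respect to $\widetilde{V}_{t+1}$, and by the induction hypothesis it therefore also satisfies $\widetilde{Q}_t(x,u,w) \geq f_0(x,u) + V_{t+1,(\tilde{\pi}_t)}(f(x,u,w))$. The matrix inequality \eqref{eq:backwardMatrixInequality} is solved, yielding $\bP_t$, the gain $\bK_t$, and hence the affine policy $\tilde{\pi}_t$. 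Invoking Theorem~\ref{thm:robustnessStep} in this situation (with $V_{t+1}$ playing the role of $V_{t+1,(\tilde{\pi}_t)}$) gives $\widetilde{V}_t(x)\geq V_{t,(\tilde{\pi}_t)}(x)$, closing the induction. Then the inner inequality $V_{t,(\tilde{\pi}_t)}(x)\geq V_t(x)$ follows from optimality.

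The main obstacle I anticipate is a subtle mismatch between the statement of Theorem~\ref{thm:robustnessStep} and what the corollary actually needs: the theorem is phrased in terms of the optimal value function $V_t$, whereas the induction must propagate a bound on the \emph{policy-specific} value function $V_{t,(\tilde{\pi}_t)}$. I would resolve this by observing that the derivation from \eqref{eq:disturbanceConst} through \eqref{eq:backwardMatrixInequality}, which underlies the theorem, really establishes an upper bound on the worst-case cost incurred when the affine linear policy associated with $\bK_t$ is applied at time $t$ and any successor cost dominated by $\widetilde{V}_{t+1}$ is incurred afterwards. Applied with $\widetilde{V}_{t+1}\geq V_{t+1,(\tilde{\pi}_t)}$, this is exactly the bound $\widetilde{V}_t\geq V_{t,(\tilde{\pi}_t)}$. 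No new calculation is required beyond retracing that derivation with the policy-specific value function in place of the optimal one, and the second inequality in the corollary then follows by the standard observation that a specific feedback policy can never outperform the minimax-optimal one.
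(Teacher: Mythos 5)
Your proposal is correct and matches the paper's intended (but unwritten) argument: the corollary is meant to follow by backward induction on $t$, applying Theorem~\ref{thm:robustnessStep} at each stage of the backward pass and using $V_{t,(\tilde{\pi}_t)} \geq V_t$ as the definitional minimax inequality. You also rightly flag and resolve the one subtlety the paper glosses over — that the derivation from \eqref{eq:disturbanceConst} to \eqref{eq:backwardMatrixInequality} fixes the affine gain $\bK_t$ and therefore actually bounds the policy-specific cost-to-go, which is the stronger statement the induction needs.
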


\section{Convexification of the backward pass}
\label{sec:4}

In this section, we cover the convexification of \eqref{eq:backwardMatrixInequality2}. At this point, we stress that there are many possibilities for convexification and choosing the one suitable for a particular form of $\bQ_t$ and $\dvr{\bM_t}$ is often an art. For this reason, we showcase three possibilities that should give the reader concrete ideas how to approach this problem.

\subsection{A simple case}
\label{sec:4.1}

If in \eqref{eq:backwardMatrixInequality2}, the matrices $\bar{q}_t^{31} , \ovl{\bQ}_t^{32} , \ovl{\bQ}_t^{33}, \ovl{\bQ}_t^{34}$ do not contain decision variables and $\ovl{\bQ}_t^{33}$ is positive definite, then a convexification of this matrix inequality can be achieved by a single Schur complement. The considered independence of $\bar{q}_t^{31} , \ovl{\bQ}_t^{32} , \ovl{\bQ}_t^{33}, \ovl{\bQ}_t^{34}$ of decision variables is often obtained, when in \eqref{eq:disturbanceConst} $g(\cdot)$ is not a function of $u$. 

Consider the matrix $\dvr{\bL_t}$ equal to
\begin{align*}
        \resizebox{\linewidth}{!}{$
    (\star)^\top
        \begin{pmatrix}
            \dvr{\bar{q}_t^{11}} - \dvr{p_t^{11}} & \dvr{\bar{q}_t^{12}}-\dvr{p_t^{12}} & \bar{q}_t^{13} & \dvr{\bar{q}_t^{14}}\\
            \dvr{\bar{q}_t^{21}}-\dvr{p_t^{21}} & \dvr{\ovl{\bQ}_t^{22}}-\dvr{\bP_t^{22}} & \ovl{\bQ}_t^{23} & \dvr{\ovl{\bQ}_t^{24}}\\
            \bar{q}_t^{31} & \ovl{\bQ}_t^{32} & 0 & \ovl{\bQ}_t^{34}\\
            \dvr{\bar{q}_t^{41}} & \dvr{\ovl{\bQ}_t^{42}} & \ovl{\bQ}_t^{43} & \dvr{\ovl{\bQ}_t^{44}}
        \end{pmatrix}
        \begin{pmatrix}
            1 & 0 & 0\\
            0 & \bI & 0\\
            \dvr{k_t^1} & \dvr{\bK_t^2} & 0\\
            0 & 0 & \bI
        \end{pmatrix}.$}
\end{align*}
Then, $\dvr{\bL_t}$ is affine linear in $\dvr{\bK_t}, \dvr{\bP_t}$ and possible 
decision variables in $\dvr{\ovl{\bQ}_t}$. In this case, \eqref{eq:backwardMatrixInequality} can be expressed as
\begin{align*}
    \dvr{\bL_t} + \begin{pmatrix}
        \dvr{k_t^1} & \dvr{\bK_t^2} & 0
    \end{pmatrix}^\top
    \ovl{\bQ}_t^{33}
    \begin{pmatrix}
        \dvr{k_t^1} & \dvr{\bK_t^2} & 0
    \end{pmatrix} \prec 0.
\end{align*}
Due to $\ovl{\bQ}_t^{33} \succ 0$, a Schur complement yields
\begin{align*}
    \begin{pmatrix}
    \dvr{l_t^{11}} & \dvr{l_t^{12}} & \dvr{l_t^{13}} & \dvr{(k_t^{1})^\top}\\
    \dvr{l_t^{21}} & \dvr{\bL_t^{22}} & \dvr{\bL_t^{23}} & \dvr{(\bK_t^{2})^\top}\\
    \dvr{l_t^{31}} & \dvr{\bL_t^{32}} & \dvr{\bL_t^{33}} & 0\\
    \dvr{k_t^1} & \dvr{\bK_t^2} & 0 & -(\ovl{\bQ}_t^{33})^{-1}
    \end{pmatrix} \prec 0.
\end{align*}
This matrix inequality is linear in all decision variables.

\subsection{Dualization for convexification}
\label{sec:4.2}

In general, we cannot assume that subblocks of $\dvr{\ovl{\bQ}_t}$ do not depend on decision variables. Nevertheless, convexification can be performed for a large class of $\dvr{\ovl{\bQ}_t}$. This class contains, for example, the case where $\bQ_t$ is fixed and $\dvr{\bM_t}$ satisfies \eqref{eq:genericMultipliers}. For dual design, we split again the $Q$-function matrix $\dvr{\ovl{\bQ}_t}$ into $\bQ_t$ and the multiplier $\dvr{\bM_t}$ to obtain from \eqref{eq:backwardMatrixInequality2} the problem
        \begin{align*}
            \minimize_{\dvr{\bP_t} \succ 0,\dvr{\bK_t},\dvr{\bM_t} \in \calM} ~&~ \trace \dvr{\bP_t} \bSigma_t
        \end{align*}
        subject to
        \begin{align}
            \resizebox{\linewidth}{!}{$
            (\star)^\top
            \begin{pmatrix}
                -\dvr{p_t^{11}} & - \dvr{p_t^{12}}\\
                -\dvr{p_t^{21}} & -\dvr{\bP_t^{22}}\\
                 & & \dvr{\bM_t^{44}} & \dvr{\bM_t^{43}} & \dvr{\bM_t^{42}} & \dvr{m_t^{41}}\\
                 & & \dvr{\bM_t^{34}} & \dvr{\bM_t^{33}} & \dvr{\bM_t^{32}} & \dvr{m_t^{31}}\\
                 & & \dvr{\bM_t^{24}} & \dvr{\bM_t^{24}} & \dvr{\bM_t^{22}} & \dvr{m_t^{21}}\\
                 & & \dvr{m_t^{14}} & \dvr{m_t^{13}} & \dvr{m_t^{12}} & \dvr{m_t^{11}}\\
                 & & & & & & q_t^{11} & q_t^{12} & q_t^{13} & q_t^{14}\\
                 & & & & & & q_t^{21} & \bQ_t^{22} & \bQ_t^{23} & \bQ_t^{24}\\
                 & & & & & & q_t^{31} & \bQ_t^{32} & \bQ_t^{33} & \bQ_t^{34}\\
                 & & & & & & q_t^{41} & \bQ_t^{42} & \bQ_t^{43} & \bQ_t^{44}
            \end{pmatrix}
            \begin{pmatrix}
                1 & 0 & 0\\
                0 & \bI & 0\\
                0 & 0 & \bI\\
                \dvr{k_t^1} & \dvr{\bK_t^2} & 0\\
                0 & \bI & 0\\
                1 & 0 & 0\\
                1 & 0 & 0\\
                0 & \bI & 0\\
                \dvr{k_t^1} & \dvr{\bK_t^2} & 0\\
                0 & 0 & \bI
            \end{pmatrix}\prec 0.$}
            \label{eq:primalMI}
        \end{align}
        
        The key is to apply the following lemma to derive an equivalent form of the matrix inequality constraint.
        
        \begin{lemma}[derived from Lemma 10.2, \cite{scherer2000robust}]
        \label{lem:dualizationLemma}
            Let $\bP$ be symmetric. Then the \emph{primal matrix inequalities}
            \begin{align*}
            \begin{pmatrix}
                \bI\\
                \bW(\bK)
            \end{pmatrix}^\top
            \bP
            \begin{pmatrix}
                \bI\\
                \bW(\bK)
            \end{pmatrix}
            \prec 0,
            ~~~~~~~
            \begin{pmatrix}
                0\\
                \bI
            \end{pmatrix}^\top
            \bP
            \begin{pmatrix}
                0\\
                \bI
            \end{pmatrix}\succ 0.
        \end{align*}
        are equivalent to the \emph{dual matrix inequalities}
        \begin{align*}
        \resizebox{\linewidth}{!}{$
            \begin{pmatrix}
                \bW(\bK)^\top\\
                -\bI
            \end{pmatrix}^\top
            \bP^{-1}
            \begin{pmatrix}
                \bW(\bK)^\top\\
                -\bI
            \end{pmatrix}
            \succ 0,
            ~~
            \begin{pmatrix}
                \bI\\
                0
            \end{pmatrix}^\top
            \bP^{-1}
            \begin{pmatrix}
                \bI\\
                0
            \end{pmatrix}\prec 0.$}
        \end{align*}
        \end{lemma}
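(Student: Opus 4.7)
The plan is to reduce the claimed equivalence to an elementary Schur-complement fact about the signs of the diagonal blocks of a symmetric $2 \times 2$ block matrix and its inverse. The key identity driving this reduction relates the ``primal'' arrangement matrix $T := \begin{pmatrix} \bI & 0 \\ \bW & \bI \end{pmatrix}$, whose two block columns are exactly $U := \begin{pmatrix} \bI \\ \bW \end{pmatrix}$ and $V := \begin{pmatrix} 0 \\ \bI \end{pmatrix}$, to the ``dual'' arrangement matrix $\tilde T := \begin{pmatrix} \bI & \bW^\top \\ 0 & -\bI \end{pmatrix}$, whose block columns are $\tilde V := \begin{pmatrix} \bI \\ 0 \end{pmatrix}$ and $\tilde U := \begin{pmatrix} \bW^\top \\ -\bI \end{pmatrix}$. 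A direct computation verifies $\tilde T = T^{-\top} D$ with $D = \mathrm{diag}(\bI, -\bI)$ (writing $\bW$ for $\bW(\bK)$).

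With this identity in hand, I would observe that the primal pair of inequalities is exactly the statement that the two diagonal blocks of $N := T^\top \bP T$ are $\prec 0$ and $\succ 0$ respectively, while the dual pair is the analogous statement for the diagonal blocks of $\tilde N := \tilde T^\top \bP^{-1} \tilde T$. The identity $\tilde T = T^{-\top} D$ gives $\tilde N = D\, N^{-1} D$, and since $D$ is block-diagonal with $\pm \bI$ entries, $\tilde N$ has exactly the same diagonal blocks as $N^{-1}$. The lemma therefore reduces to the following elementary fact: for a symmetric $2 \times 2$ block matrix $N = \begin{pmatrix} X & Y \\ Y^\top & Z \end{pmatrix}$, the conditions $X \prec 0$ and $Z \succ 0$ hold if and only if $N$ is invertible and the same sign pattern is inherited by the diagonal blocks of $N^{-1}$.

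To establish this elementary fact, I would invoke the standard block-inverse formulas $(N^{-1})_{11} = (X - Y Z^{-1} Y^\top)^{-1}$ and $(N^{-1})_{22} = (Z - Y^\top X^{-1} Y)^{-1}$. Given $X \prec 0$ and $Z \succ 0$, the subtracted term $Y Z^{-1} Y^\top$ is positive semi-definite, so $X - Y Z^{-1} Y^\top \prec 0$ and hence its inverse is $\prec 0$; by the symmetric sign-definiteness argument, the other diagonal block of $N^{-1}$ is $\succ 0$. This gives the forward direction; the reverse follows by applying the same argument to $N^{-1}$, or equivalently by using that inversion is an involution in the dualization identity.

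The main obstacle I expect is the careful bookkeeping of the identity $\tilde T = T^{-\top} D$ and of the signs in the Schur-complement formulas, so that the sign patterns survive correctly through the congruence by $D$ and through inversion. Once this is pinned down, the lemma follows without any further work specific to the robust-control context, and no invertibility assumption on $\bP$ needs to be imposed in advance, since it emerges from the signature of the diagonal blocks of $N$.
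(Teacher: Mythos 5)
Your proof is correct. Note, however, that the paper does not actually prove this lemma at all --- it is imported verbatim as a special case of the dualization lemma (Lemma 10.2 in the cited reference), whose textbook proof is usually phrased in terms of inertia and complementary subspaces: $\bP$ is negative definite on $\mathrm{im}\begin{pmatrix}\bI\\ \bW(\bK)\end{pmatrix}$ and positive definite on the complementary subspace $\mathrm{im}\begin{pmatrix}0\\ \bI\end{pmatrix}$ if and only if $\bP^{-1}$ has the flipped definiteness pattern on the respective orthogonal complements, which are spanned by $\begin{pmatrix}\bW(\bK)^\top\\ -\bI\end{pmatrix}$ and $\begin{pmatrix}\bI\\ 0\end{pmatrix}$. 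Your argument instead fixes the concrete bases, packages them into $T=\begin{pmatrix}\bI&0\\ \bW&\bI\end{pmatrix}$ and $\tilde T=\begin{pmatrix}\bI&\bW^\top\\ 0&-\bI\end{pmatrix}$, verifies $\tilde T=T^{-\top}D$ with $D=\mathrm{diag}(\bI,-\bI)$, and reduces everything to the elementary Schur-complement fact that a symmetric block matrix with $N_{11}\prec 0$, $N_{22}\succ 0$ is invertible with $(N^{-1})_{11}\prec 0$, $(N^{-1})_{22}\succ 0$ (and conversely, by involutivity of inversion). All the individual steps check out: the congruence by $D$ preserves the diagonal blocks, the block-inverse formulas apply because the relevant Schur complements are definite, and the primal inequalities indeed force $N$ (hence $\bP$) to be nonsingular, so no separate invertibility hypothesis is needed beyond what the statement of the dual already presupposes. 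What your route buys is a short, fully self-contained and purely computational proof tailored to this graph/anti-graph parametrization; what the cited subspace formulation buys is generality (arbitrary complementary subspaces, independent of the choice of basis matrices), which is why the paper can reuse it in Lemma \ref{lem:Dualization2} where the column map is only injective rather than part of an invertible $T$.
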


        To denote the dual matrix inequality of \eqref{eq:primalMI}, we define
        \begin{align*}
            &\begin{pmatrix}
                 \dvr{p_t^{11}} &  \dvr{p_t^{12}}\\
                 \dvr{p_t^{21}} & \dvr{\bP_t^{22}}
            \end{pmatrix}^{-1}
            =
            \dvr{\bP_t^{-1}}
            =
            \dvr{\widetilde{\bP}_t}
            =
            \begin{pmatrix}
                \dvr{\tilde{p}_t^{11}} &  \dvr{\tilde{p}_t^{12}}\\
                \dvr{\tilde{p}_t^{21}} & \dvr{\widetilde{\bP}_t^{22}}
            \end{pmatrix},\\
        &\resizebox{\linewidth}{!}{$
            \begin{pmatrix}
                \dvr{\bM_t^{44}} & \dvr{\bM_t^{43}} & \dvr{\bM_t^{42}} & \dvr{m_t^{41}}\\
                \dvr{\bM_t^{34}} & \dvr{\bM_t^{33}} & \dvr{\bM_t^{32}} & \dvr{m_t^{31}}\\
                \dvr{\bM_t^{24}} & \dvr{\bM_t^{23}} & \dvr{\bM_t^{22}} & \dvr{m_t^{21}}\\
                \dvr{m_t^{14}} & \dvr{m_t^{13}} & \dvr{m_t^{12}} & \dvr{m_t^{11}}\\
            \end{pmatrix}^{-1}
            =
            \dvr{\bM}_t^{-1}
            =
            \dvr{\widetilde{\bM}_t}
            =
            \begin{pmatrix}
                \dvr{\widetilde{\bM}_t^{44}} & \dvr{\widetilde{\bM}_t^{43}} & \dvr{\widetilde{\bM}_t^{42}} & \dvr{\tilde{m}_t^{41}}\\
                \dvr{\widetilde{\bM}_t^{34}} & \dvr{\widetilde{\bM}_t^{33}} & \dvr{\widetilde{\bM}_t^{32}} & \dvr{\tilde{m}_t^{31}}\\
                \dvr{\widetilde{\bM}_t^{24}} & \dvr{\widetilde{\bM}_t^{23}} & \dvr{\widetilde{\bM}_t^{22}} & \dvr{\tilde{m}_t^{21}}\\
                \dvr{\tilde{m}_t^{14}} & \dvr{\tilde{m}_t^{13}} & \dvr{\tilde{m}_t^{12}} & \dvr{\tilde{m}_t^{11}}
            \end{pmatrix},$}\\
        &\resizebox{\linewidth}{!}{$
            \begin{pmatrix}
                q_t^{11} & q_t^{12} & q_t^{13} & q_t^{14}\\
                q_t^{21} & \bQ_t^{22} & \bQ_t^{23} & \bQ_t^{24}\\
                q_t^{31} & \bQ_t^{32} & \bQ_t^{33} & \bQ_t^{34}\\
                q_t^{41} & \bQ_t^{42} & \bQ_t^{43} & \bQ_t^{44}
            \end{pmatrix}^{-1}
            =
            \bQ_t^{-1}
            =
            \widetilde{\bQ}_t
            =
            \begin{pmatrix}
                \tilde{q}_t^{11} & \tilde{q}_t^{12} & \tilde{q}_t^{13} & \tilde{q}_t^{14}\\
                \tilde{q}_t^{21} & \widetilde{\bQ}_t^{22} & \widetilde{\bQ}_t^{23} & \widetilde{\bQ}_t^{24}\\
                \tilde{q}_t^{31} & \widetilde{\bQ}_t^{32} & \widetilde{\bQ}_t^{33} & \widetilde{\bQ}_t^{34}\\
                \tilde{q}_t^{41} & \widetilde{\bQ}_t^{42} & \widetilde{\bQ}_t^{43} & \widetilde{\bQ}_t^{44}
            \end{pmatrix}.$}
        \end{align*}

        In addition, for dualization, we have to require
        \begin{align}
            \begin{pmatrix}
                \dvr{\bM_t^{33}} & \dvr{\bM_t^{32}} & \dvr{m_t^{31}}\\
                \dvr{\bM_t^{23}} & \dvr{\bM_t^{22}} & \dvr{m_t^{21}}\\
                \dvr{m_t^{13}} & \dvr{m_t^{12}} & \dvr{m_t^{11}}\\
                 & & & q_t^{11} & q_t^{12} & q_t^{13} & q_t^{14}\\
                 & & & q_t^{21} & \bQ_t^{22} & \bQ_t^{23} & \bQ_t^{24}\\
                 & & & q_t^{31} & \bQ_t^{32} & \bQ_t^{33} & \bQ_t^{34}\\
                 & & & q_t^{41} & \bQ_t^{42} & \bQ_t^{43} & \bQ_t^{44}
            \end{pmatrix} \succ 0.
            \label{eq:regularity}
        \end{align}
        By Lemma \ref{lem:dualizationLemma}, we then obtain that the matrix
        \begin{align*}
            \resizebox{\linewidth}{!}{$
            (\star)^\top
            \begin{pmatrix}
                -\dvr{\tilde{p}_t^{11}} & - \dvr{\tilde{p}_t^{12}}\\
                -\dvr{\tilde{p}_t^{21}} & -\dvr{\widetilde{\bP}_t^{22}}\\
                 & & \dvr{\widetilde{\bM}_t^{44}} & \dvr{\widetilde{\bM}_t^{43}} & \dvr{\widetilde{\bM}_t^{42}} & \dvr{\tilde{m}_t^{41}}\\
                 & & \dvr{\widetilde{\bM}_t^{34}} & \dvr{\widetilde{\bM}_t^{33}} & \dvr{\widetilde{\bM}_t^{32}} & \dvr{\tilde{m}_t^{31}}\\
                 & & \dvr{\widetilde{\bM}_t^{24}} & \dvr{\widetilde{\bM}_t^{23}} & \dvr{\widetilde{\bM}_t^{22}} & \dvr{\tilde{m}_t^{21}}\\
                 & & \dvr{\tilde{m}_t^{14}} & \dvr{\tilde{m}_t^{13}} & \dvr{\tilde{m}_t^{12}} & \dvr{\tilde{m}_t^{11}}\\
                 & & & & & & \tilde{q}_t^{11} & \tilde{q}_t^{12} & \tilde{q}_t^{13} & \tilde{q}_t^{14}\\
                 & & & & & & \tilde{q}_t^{21} & \widetilde{\bQ}_t^{22} & \widetilde{\bQ}_t^{23} & \widetilde{\bQ}_t^{24}\\
                 & & & & & & \tilde{q}_t^{31} & \widetilde{\bQ}_t^{32} & \widetilde{\bQ}_t^{33} & \widetilde{\bQ}_t^{34}\\
                 & & & & & & \tilde{q}_t^{41} & \widetilde{\bQ}_t^{42} & \widetilde{\bQ}_t^{43} & \widetilde{\bQ}_t^{44}
            \end{pmatrix}
	        \begin{pmatrix}
	        \dvr{(k_t^1)^\top} & 0 & 1 & 1 & 0 & \dvr{(k_t^1)^\top} & 0\\
	        \dvr{(\bK_t^2)^\top} & \bI & 0 & 0 & \bI &\dvr{(\bK_t^2)^\top} & 0\\
	        0 & 0 & 0 & 0 & 0 & 0 & \bI\\
	        -\bI & 0 & 0 & 0 & 0 & 0 & 0\\
	        0 & -\bI & 0 & 0 & 0 & 0 & 0\\
	        0 & 0 & -1 & 0 & 0 & 0 & 0\\
	        0 & 0 & 0 & -1 & 0 & 0 & 0\\
	        0 & 0 & 0 & 0 & -\bI & 0 & 0\\
	        0 & 0 & 0 & 0 & 0 & -\bI & 0\\
	        0 & 0 & 0 & 0 & 0 & 0 & -\bI
	    \end{pmatrix}$}
        \end{align*}
must be postive definite, together with the requirements
\begin{align*}
    \begin{pmatrix}
        \dvr{\tilde{p}_t^{11}} & \dvr{\tilde{p}_t^{12}}\\
        \dvr{\tilde{p}_t^{21}} & \dvr{\widetilde{\bP}_t^{22}}
    \end{pmatrix}
    \succ 0, \qquad
    \dvr{\widetilde{\bM}_t^{44}} \prec 0
\end{align*}
as an equivalent of \eqref{eq:primalMI}. Since $\dvr{\bP_t} \succ 0$ is required, convexity is obtained via the Schur complement.

One apparent disadvantage of this approach is \eqref{eq:regularity}. The assumption that e.g. $\bQ_t$ is positive definite seems overly restrictive as, for example, $\bQ_t$ is usually only positive semi-definite for linear systems. Therefore, we mention that \eqref{eq:regularity} can be relaxed to positive semi-definiteness by making use of systematic perturbations.

\subsection{The canonical dualization}
\label{sec:4.3}

The dualization approach from Section \ref{sec:4.2} convexifies the robust dynamic programming step \eqref{eq:backwardMatrixInequality2} for arbitrary affine linear dependence of $\dvr{\bM_t}$ and $\bQ_t$ on decision variables. However, it in turn requires a convex parametrization of the inverse matrices $\dvr{\widetilde{\bM}_t}$ and $\widetilde{\bQ}_t$, which might not always be achievable. For this reason, we explore a general approach for dualization in this section. This general result is based on a factorization approach for $\dvr{\ovl{\bQ}_t}$ that allows for efficient parametrization of the inverses.
Assume that $\dvr{\ovl{\bQ}_t}$ can be written in the form
\begin{align*}
    \dvr{\ovl{\bQ}_t} = 
    \begin{pmatrix}
    \bPi_{\ovl{\bQ}}^{0}\\
    \bPi_{\ovl{\bQ}}^{1}\\
    \bPi_{\ovl{\bQ}}^{2}
    \end{pmatrix}^\top
    \begin{pmatrix}
    \bS_0\\
    & \dvr{\bS_1}\\
    & & - \dvr{\bS_2}
    \end{pmatrix}
    \begin{pmatrix}
    \bPi_{\ovl{\bQ}}^{0}\\
    \bPi_{\ovl{\bQ}}^{1}\\
    \bPi_{\ovl{\bQ}}^{2}
    \end{pmatrix}
\end{align*}
where $\bS_0$ is a constant matrix and $\dvr{\bS_1}$, $\dvr{\bS_2}$ have positive definite matrix decision variables on their diagonals, i.e.,
\begin{align*}
	\resizebox{\linewidth}{!}{$
    \dvr{\bS_1} = \diag \left(
        \dvr{\bLambda_1}, \dvr{\bLambda_2}, \ldots,\dvr{\bLambda_{s_1}} \right), ~
    \dvr{\bS_2} = \diag \left(\dvr{\bGamma_1}, \dvr{\bGamma_2},\ldots \dvr{\bGamma_{s_2}}\right),$}
\end{align*}
where repeated blocks are explicitly allowed. An example for this structure of $\dvr{\ovl{\bQ}_t} = \bQ_t + \dvr{\bM}_t$ is obtained if $\bQ_t$ is constant and $\dvr{\bM_t}$ is of the form \eqref{eq:genericMultipliers}. In this case, we have
\begin{align*}
    \dvr{\ovl{\bQ}_t} = \bQ_t + \sum_{i=1}^s \dvr{\lambda_t^{(i)}}\bM^{(i)},
\end{align*}
where $\bM^{(i)}$ are symmetric matrices and $\dvr{\lambda_t^{(i)}} \geq 0$ are non-negative parameters. Here, the $\bM^{(i)}$ can be decomposed with full rank matrices $\bM_p^{(i)}$ and $\bM_m^{(i)}$ according to
\begin{align*}
    \bM^{(i)} = (\bM_p^{(i)})^\top \bM_p^{(i)} - (\bM_m^{(i)})^\top \bM_m^{(i)} .
\end{align*}
With this decomposition, we can now denote $\dvr{\ovl{\bQ}_t}$ as
\begin{align*}
    \resizebox{\linewidth}{!}{$
    \begin{pmatrix}
        \bI\\
        \bM_p^{(1)}\\
        \vdots\\
        \bM_p^{(s)}\\
        \bM_m^{(1)}\\
        \vdots\\
        \bM_m^{(s)}\\
    \end{pmatrix}^\top
    \begin{pmatrix}
        \bQ_t\\
        & \dvr{\lambda_1} \bI\\
        & & \ddots \\
        & & &\dvr{\lambda_s} \bI\\
        & & & & -\dvr{\lambda_1} \bI\\
        & & & & & \ddots\\
        & & & & & & -\dvr{\lambda_s} \bI
    \end{pmatrix}
    \begin{pmatrix}
        \bI\\
        \bM_p^{(1)}\\
        \vdots\\
        \bM_p^{(s)}\\
        \bM_m^{(1)}\\
        \vdots\\
        \bM_m^{(s)}\\
    \end{pmatrix}$}
\end{align*}
where $\dvr{\lambda_1} \bI$ should be interpreted as repeated $1\times 1$ blocks. 
With our structural assumption, \eqref{eq:backwardMatrixInequality} can be denoted as
\begin{align}
	\resizebox{\linewidth}{!}{$
    (\star)^\top
    \begin{pmatrix}
    -\dvr{\bP_t}\\
    & -\dvr{\bS_2}\\
    & & \dvr{\bS_1}\\
    & & & \bS_0
    \end{pmatrix}
    \begin{pmatrix}
    \bI & 0\\
    \dvr{\bW_{11}(\bK_t)} & \bW_{12}\\
    \dvr{\bW_{21}(\bK_t)} & \bW_{22}\\
    \dvr{\bW_{31}(\bK_t)} & \bW_{32}
    \end{pmatrix}
    \prec 0.$}
    \label{eq:canonicalPrimal}
\end{align}
Here, the matrices $\bW_{ij}$ are defined as
\begin{align*}
    \begin{pmatrix}
        \dvr{\bW_{11}(\bK_t)}\\
        \dvr{\bW_{21}(\bK_t)}\\
        \dvr{\bW_{31}(\bK_t)}
    \end{pmatrix}
    &=
    \begin{pmatrix}
    \bPi_{\ovl{\bQ}}^{2}\\
    \bPi_{\ovl{\bQ}}^{1}\\
    \bPi_{\ovl{\bQ}}^{0}
    \end{pmatrix}
    \begin{pmatrix}
        1 & 0\\
        0 & \bI\\
        0 & 0\\
        \dvr{k_t^1} & \dvr{\bK_t^2}
    \end{pmatrix},\\
    \begin{pmatrix}
        \bW_{12}\\
        \bW_{22}\\
        \bW_{32}
    \end{pmatrix}
    &=
    \begin{pmatrix}
    \bPi_{\ovl{\bQ}}^{2}\\
    \bPi_{\ovl{\bQ}}^{1}\\
    \bPi_{\ovl{\bQ}}^{0}
    \end{pmatrix}
    \begin{pmatrix}
        0\\
        0\\
        \bI\\
        0
    \end{pmatrix}.
\end{align*}
Again, we want to apply the dualization lemma. Unfortunately, the identity block in the projection matrix of \eqref{eq:canonicalPrimal} is not large enough to apply Lemma \ref{lem:dualizationLemma}. Therefore, we need the following modified dualization lemma.

\begin{lemma}
    \label{lem:Dualization2}
    Let $\bP$ be a symmetric matrix and let $\bW_1$ have full column rank with left inverse $\bW_1^\dagger$. Then, the dual matrix inequalities
    \begin{align}
        \resizebox{\linewidth}{!}{$
            \begin{pmatrix}
                (\bW_2\bW_1^\dagger)^\top\\
                -\bI
            \end{pmatrix}^\top
            \bP^{-1}
            \begin{pmatrix}
                (\bW_2\bW_1^\dagger)^\top\\
                -\bI
            \end{pmatrix} \succ 0, ~ \begin{pmatrix}
                \bI\\
                0
            \end{pmatrix}^\top
            \bP^{-1}
            \begin{pmatrix}
                \bI\\
                0
            \end{pmatrix} \prec 0$} \nonumber 
    \end{align}
    imply the primal matrix inequalities
    \begin{align}
        \begin{pmatrix}
                \bW_1\\
                \bW_2
            \end{pmatrix}^\top
            \bP
            \begin{pmatrix}
                \bW_1\\
                \bW_2
            \end{pmatrix} \prec 0, 
            \qquad
            \begin{pmatrix}
                0\\ \bI
            \end{pmatrix}^\top
            \bP
            \begin{pmatrix}
                0\\
                \bI
            \end{pmatrix} \succ 0. \label{eq:calPrimal}
    \end{align}
\end{lemma}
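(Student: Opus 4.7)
The plan is to reduce this modified dualization lemma directly to the standard dualization lemma (Lemma~\ref{lem:dualizationLemma}) via a carefully chosen substitution, and then leverage the full column rank of $\bW_1$ to lift the conclusion to the claimed form.

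First I would observe that the dual matrix inequalities in the hypothesis are syntactically identical to those in Lemma~\ref{lem:dualizationLemma} under the substitution $\bW(\bK) := \bW_2 \bW_1^\dagger$. Applying Lemma~\ref{lem:dualizationLemma} with this substitution immediately yields the two equivalent primal inequalities
\begin{align*}
\begin{pmatrix}\bI\\ \bW_2\bW_1^\dagger\end{pmatrix}^\top \bP \begin{pmatrix}\bI\\ \bW_2\bW_1^\dagger\end{pmatrix} \prec 0, \qquad \begin{pmatrix}0\\ \bI\end{pmatrix}^\top \bP \begin{pmatrix}0\\ \bI\end{pmatrix} \succ 0.
\end{align*}
The second of these is already one of the claimed primal inequalities in \eqref{eq:calPrimal}, so no further work is required for it.

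To obtain the remaining inequality, I would use the defining property $\bW_1^\dagger \bW_1 = \bI$ of the left inverse to factor
\begin{align*}
\begin{pmatrix}\bW_1\\ \bW_2\end{pmatrix} = \begin{pmatrix}\bI\\ \bW_2\bW_1^\dagger\end{pmatrix} \bW_1,
\end{align*}
and then conjugate the first primal inequality above by $\bW_1$. Concretely, multiplying on the left by $\bW_1^\top$ and on the right by $\bW_1$ gives exactly $\begin{pmatrix}\bW_1\\ \bW_2\end{pmatrix}^\top \bP \begin{pmatrix}\bW_1\\ \bW_2\end{pmatrix}$ in the middle, and the strict negative definiteness is preserved because $\bW_1$ has full column rank: if $x \neq 0$ then $\bW_1 x \neq 0$, so the quadratic form remains strictly negative on all nonzero $x$.

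I do not anticipate a major obstacle here; the only subtlety is recognizing that the full column rank assumption is exactly the ingredient needed to keep the strict inequality strict after the congruence transformation by the possibly non-square matrix $\bW_1$. Note also that this is only an implication rather than an equivalence (in contrast to Lemma~\ref{lem:dualizationLemma}), which is consistent with the fact that conjugation by a non-surjective $\bW_1$ loses information and therefore cannot be inverted.
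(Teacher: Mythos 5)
Your proposal is correct and follows essentially the same route as the paper's own proof: apply Lemma~\ref{lem:dualizationLemma} with $\bW(\bK) = \bW_2\bW_1^\dagger$ and then conjugate the resulting primal inequality by $\bW_1$ using $\bW_1^\dagger\bW_1 = \bI$. Your write-up is in fact slightly more careful than the paper's, since you make explicit that the full column rank of $\bW_1$ is what preserves strict definiteness under the congruence, a point the paper leaves implicit.
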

\begin{proof}
    First, we apply Lemma \ref{lem:dualizationLemma} to conclude that the dual matrix inequalities are equivalent to
    \begin{align}
    	\resizebox{\linewidth}{!}{$
        \begin{pmatrix}
            \bI\\
            (\bW_2\bW_1^\dagger)
        \end{pmatrix}^\top
        \bP
        \begin{pmatrix}
            \bI\\
            (\bW_2\bW_1^\dagger)
        \end{pmatrix} \prec 0, \quad 
            \begin{pmatrix}
                0\\ \bI
            \end{pmatrix}^\top
            \bP
            \begin{pmatrix}
                0\\
                \bI
            \end{pmatrix} \succ 0.$} \label{eq:Dualization2_1}
    \end{align}
    Now, since $\bW_1^\dagger$ is a left inverse of $\bW_1$, it holds that $\bW_1^\dagger \bW_1 = \bI$. Consequently, by multiplying \eqref{eq:Dualization2_1} from both sides with $\bW_1$, we obtain \eqref{eq:calPrimal}.
\end{proof}
This form of the dualization lemma can be applied to \eqref{eq:canonicalPrimal}, since it does not require a large identity block in the projection matrix. Unfortunately, here, the primal and the dual matrix inequalities are no longer equivalent, but the dual inequality is a stronger condition than the primal inequality. We apply Lemma \ref{lem:Dualization2} to \eqref{eq:canonicalPrimal} with
\begin{align*}
        \resizebox{\linewidth}{!}{$
    \dvr{\bW_1} = \begin{pmatrix}
    \bI & 0\\
    \dvr{\bW_{11}(\bK_t)} & \bW_{12}
    \end{pmatrix}, ~~~
    \dvr{\bW_2} = \begin{pmatrix}
    \dvr{\bW_{21}(\bK_t)} & \bW_{22}\\
    \dvr{\bW_{31}(\bK_t)} & \bW_{32}
    \end{pmatrix}.$}
\end{align*}
To this end, we require $\dvr{\bW_1}$ to have full column rank implying that also $\bW_{12}$ must have full column rank. Therefore, a left inverse $\bW_{12}^\dagger$ of $\bW_{12}$ exists and
\begin{align*}
    \begin{pmatrix}
    \bI & 0\\
    -\bW_{12}^\dagger\dvr{\bW_{11}(\bK_t)} & \bW_{12}^\dagger
    \end{pmatrix}
    =
    \begin{pmatrix}
    \bI & 0\\
    \dvr{\bW_{11}(\bK_t)} & \bW_{12}
    \end{pmatrix}^\dagger
\end{align*}
is a left inverse $\dvr{\bW_1^\dagger}$ of $\dvr{\bW}_1$. The product $\dvr{\bW_2}\dvr{\bW_1^\dagger}$ equals
\begin{align*}
    \begin{pmatrix}
    \dvr{\bW_{21}(\bK_t)} - \bW_{22}\bW_{12}^\dagger \dvr{\bW_{11}(\bK_t)} & \bW_{22}\bW_{12}^\dagger\\
    \dvr{\bW_{31}(\bK_t)} - \bW_{32}\bW_{12}^\dagger \dvr{\bW_{11}(\bK_t)} & \bW_{32}\bW_{12}^\dagger
    \end{pmatrix}\\
    =:
    \begin{pmatrix}
    \dvr{\widetilde{\bW}_{11}(\bK_t)} & \widetilde{\bW}_{12}\\
    \dvr{\widetilde{\bW}_{21}(\bK_t)} & \widetilde{\bW}_{22}
    \end{pmatrix}
\end{align*}
and with that, we obtain the dual matrix inequality
\begin{align*}
    \resizebox{\linewidth}{!}{$
    (\star)^\top
    \begin{pmatrix}
    -\dvr{\widetilde{\bP}_t}\\
    & -\dvr{\widetilde{\bS}_2}\\
    & & \dvr{\widetilde{\bS}_1}\\
    & & & \widetilde{\bS}_0
    \end{pmatrix}\nonumber
    \begin{pmatrix}
    	\dvr{\widetilde{\bW}_{11}^\top(\bK_t)} & \widetilde{\bW}_{12}^\top\\
    	\dvr{\widetilde{\bW}_{21}^\top (\bK_t)} & \widetilde{\bW}_{22}^\top\\
    	-\bI & 0\\
    	0 & -\bI
    \end{pmatrix}\succ 0.$}
\end{align*}
By a Schur complement w.r.t. $\dvr{\widetilde{\bP}_t}$, we finally obtain
\begin{align*}
        \resizebox{\linewidth}{!}{$
    \begin{pmatrix}
    \dvr{\widetilde{\bS}_1} - \widetilde{\bW}_{12}\dvr{\widetilde{\bS}_2}\widetilde{\bW}_{12}^\top  & -\widetilde{\bW}_{12}\dvr{\widetilde{\bS}_2}\widetilde{\bW}_{22}^\top & \dvr{\widetilde{\bW}_{11}(\bK_t)}\\
    -\widetilde{\bW}_{22}\dvr{\widetilde{\bS}_2}\widetilde{\bW}_{12}^\top & \widetilde{\bS}_0 - \widetilde{\bW}_{22}\dvr{\widetilde{\bS}_2}\widetilde{\bW}_{22}^\top & \dvr{\widetilde{\bW}_{12}(\bK_t)}\\
    \dvr{\widetilde{\bW}_{11}^\top(\bK_t)} & \dvr{\widetilde{\bW}_{12}^\top (\bK_t)} & \dvr{\bP_t}
    \end{pmatrix}\succ 0.$}
\end{align*}
Notice that $\dvr{\widetilde{\bW}_{11}(\bK_t)}$ and $\dvr{\widetilde{\bW}_{21}(\bK_t)}$ depend linearly on $\dvr{\bK_t}$ and that the matrices $\dvr{\widetilde{\bS}_1}$ and $\dvr{\widetilde{\bS}_2}$ can, due to their structure, be convexly parametrized just like $\dvr{\bS_1}$ and $\dvr{\bS_2}$.

Finally, let us mention that none of the convexifications is superior to the others. Thus, Subsection \ref{sec:4.1} is preferable when it is applicable because of its simplicity. If it fails, we should try Subsection \ref{sec:4.2}. Only if the convex parametrization of $\widetilde{\bM}_t$ fails do we recommend the abstract, general convexification from Subsection \ref{sec:4.3}, due to its additional conservatism.

\section{Approximations for the $Q$-function}
\label{sec:5}


The results in Theorem \ref{thm:robustnessStep} and Corollary \ref{cor:robustnessDDP} that robust DDP indeed optimizes the robust performance of the policy $(\tilde{\pi}_t)$ rely on the inequality \eqref{eq:upperBounding} for all $x,u,w$ and $t = T-1,\ldots, 0$. Here, $\widetilde{Q}_t(\cdot)$ is a quadratic function of the form \eqref{eq:quadraticQ} defined by a matrix $\bQ_t$. The function $\widetilde{V}_{t+1}(\cdot)$ is always going to be quadratic. However, nonlinear dynamics $f$ or costs $f_0$ might occur and, in this case, finding a tight robust estimate for $\bQ_t$ can be hard. Non-tight estimates, on the other hand, might not be sufficient for achieving satisfying performance. Even tight estimates might be dissatisfying. For this reason, we discuss estimations for $\bQ_t$ in the course of this section.

\subsection{Approximation by Taylor Expansion}
\label{sec:5.1}

One idea for estimating $\bQ_t$ is to use a Taylor series expansion of order two, i.e., we choose $\bQ_t$ such that 
\begin{align*}
	\resizebox{\linewidth}{!}{$
    f_0(x,u) + \widetilde{V}_{t+1}(f(x,u,w)) = \begin{pmatrix}
        1 & (\delta_t^x)^\top& (\delta_t^w)^\top & (\delta_t^u)^\top
    \end{pmatrix}$}\\
	\resizebox{\linewidth}{!}{$
    \bQ_t
    \begin{pmatrix}
        1 & (\delta_t^x)^\top & (\delta_t^w)^\top & (\delta_t^u)^\top
    \end{pmatrix}^\top
    +
    o(\|\delta_t^x\|^2 + \|\delta_t^w\|^2 + \|\delta_t^u\|^2).$}
\end{align*}
This approximation can easily be computed. Unfortunately, it has the following disadvantages:
\begin{compactitem}
    \item It can violate the robustness requirement \eqref{eq:upperBounding}.
    \item $\bQ_t$ is not guaranteed to be positive semi-definite.
\end{compactitem}
We stress that a mild violation of the requirement \eqref{eq:upperBounding} could be tolerated since this constraint could make the DDP quite conservative. The positive definiteness of $\bQ_t$, on the other hand, is needed for the procedure. We therefore recommend a regularization for $\bQ_t$, i.e., adding a positive definite term onto $\bQ_t$ that ensures that $\bQ_t$ is positive definite, and improves the robustness.

\subsection{Approximation by affine linearization}
\label{sec:5.2}

Instead of approximating $\bQ_t$, we can use a linearization of the dynamics $f(x,u,w)$, i.e.,
\begin{align*}
    f(x,u,w) \approx f_t + \bA_t \delta_t^x + \bB_t^u \delta_t^u + \bB_t^{w} \delta_t^w
\end{align*}
and a quadratic approximation of the function $f_0$, i.e.,
\begin{align*}
    f_0(x,u) \approx \begin{pmatrix}
        1\\
        \delta_t^x\\
        \delta_t^u
    \end{pmatrix}^\top
    \begin{pmatrix}
    r_t^{11} & r_t^{12} & r_t^{13}\\
    r_t^{21} & \bR_t^{22} & \bR_t^{23}\\
    r_t^{31} & \bR_t^{32} & \bR_t^{33}
    \end{pmatrix}
    \begin{pmatrix}
        1\\
        \delta_t^x\\
        \delta_t^u
    \end{pmatrix}.
\end{align*}
For this approximation, we can compute $\bQ_t$ exactly as
\begin{align*}
    \resizebox{\linewidth}{!}{$
    (\star)^\top
    \begin{pmatrix}
    p_{t+1}^{11} & p_{t+1}^{12}\\
    p_{t+1}^{21} & \bP_{t+1}^{22}\\
    & & r_t^{11} & r_t^{12} & r_t^{13}\\
    & & r_t^{21} & \bR_t^{22} & \bR_t^{23}\\
    & &r_t^{31} & \bR_t^{32} & \bR_t^{33}
    \end{pmatrix}
    \begin{pmatrix}
        1 & 0 & 0 & 0\\
        f_t & \bA_t  & \bB_t^u & \bB_t^{w}\\
        1 & 0 & 0 & 0\\
        0 & \bI & 0 & 0\\
        0 & 0 & \bI & 0
    \end{pmatrix}.$}
\end{align*}
This approximation offers the advantage that $\bQ_t$ is always positive semi-definite if $\bR_t$ is positive semi-definite. In addition, we see that for affine linear dynamics and quadratic costs, the matrix $\bQ_t$ can be computed exactly.

\section{Numerical Examples}
\label{sec:6}

To demonstrate the robust performance of our method, we study the inverted pendulum dynamics
\begin{align}
    \begin{split}
    \dot{\theta}(t) &= \omega (t)\\
    \dot{\omega}(t) &= -d_1 \omega (t) + \frac{g}{l}\sin (\theta (t)) + \cos(\theta (t)) \dot{v}(t)\\
    \dot{s}(t) &= v(t)\\
    \dot{v}(t) &= - d_2 v(t) + u(t) .
    \end{split}
    \label{eq:inv_pendulum}
\end{align}
In this model, $s(t)$ is the position of a cart at time $t$ with a pendulum placed on top of it. Accordingly, $v(t)$ is the velocity of the cart, whereas $\theta(t)$ and $\omega(t)$ are the angle and the angular velocity of the pendulum. The input $u(t)$ affects the acceleration of the cart directly. In \eqref{eq:inv_pendulum}, $g = 9.81 \frac{\mathrm{m}}{\mathrm{s}^2}$ is the gravitational acceleration and $l = 1 \mathrm{m}$. The friction parameters $d_1,d_2 \in [0,0.1]\frac{1}{s}$ are assumed to be uncertain. The control objective is to minimize
\begin{align}
    \int_{0}^{10} u^2(t) \diff t + 1000 \|x(10)\|^2, \label{eq:objective_MonteCarlo}
\end{align}
where $x(t) = 0$ corresponds to the upright pendulum position.
The problem is discretized by partitioning the time interval $[0,10]$ into $T = 50$ intervals $[t_k,t_{k+1}]$ of the same size and choosing $u(t)$ to be equal to the constant value $u_k$ on the intervals. The discrete state is then chosen as $x_k = x(t_k)$ and the discrete dynamics are computed using an ODE solver. To enable the application of robust DDP to the considered problem, the uncertainties $d_1, d_2 \in [0,0.1]$ are covered with a generalized plant using the standard modelling steps from robust control.

To demonstrate the benefits of robust DDP, we consider a Monte Carlo series of simulations with system \eqref{eq:inv_pendulum} and parameters $d_1,d_2$ sampled from the uniform distribution on $[0,0.1]\frac{1}{s}$. In addition to $d_1$ and $d_2$ also the initial conditions $\theta (0) \in [\pi-1,\pi+1], \omega (0) \in [-0.5,0.5], s(0) \in [-1,1], v(0) \in [-1,1]$ are sampled from uniform distributions. Our robust DDP method is then applied, where the $Q$-function is approximated using an affine linearization of the system dynamics (see Subsection \ref{sec:5.2}) and the third possibility for convexification (see Subsection \ref{sec:4.3}) is applied. For comparison of the performance of robust DDP and nominal DDP, we generate a single policy $\tilde{\pi}_t (x_t) = k_t^1 + \bK_t^2 \delta_t^x$ with either robust or nominal DDP, run a simulation of system \eqref{eq:inv_pendulum} and measure the performance \eqref{eq:objective_MonteCarlo} in each Monte Carlo shot. The running average costs of the Monte Carlo shots of robust and nominal DDP are shown in Figure~\ref{fig:RunningAverages}. As we can see, the costs resulting from robust DDP are significantly lower than the costs from nominal DDP. Particularly, we highlight the outliers in the costs of nominal DDP in Figure~\ref{fig:RunningAverages}, where the nominal planning method fails to stabilize the pendulum in the upright position. This does not happen with robust DDP.

\begin{figure}
	\vspace{1mm}
    \centering
    \includegraphics[width = \linewidth]{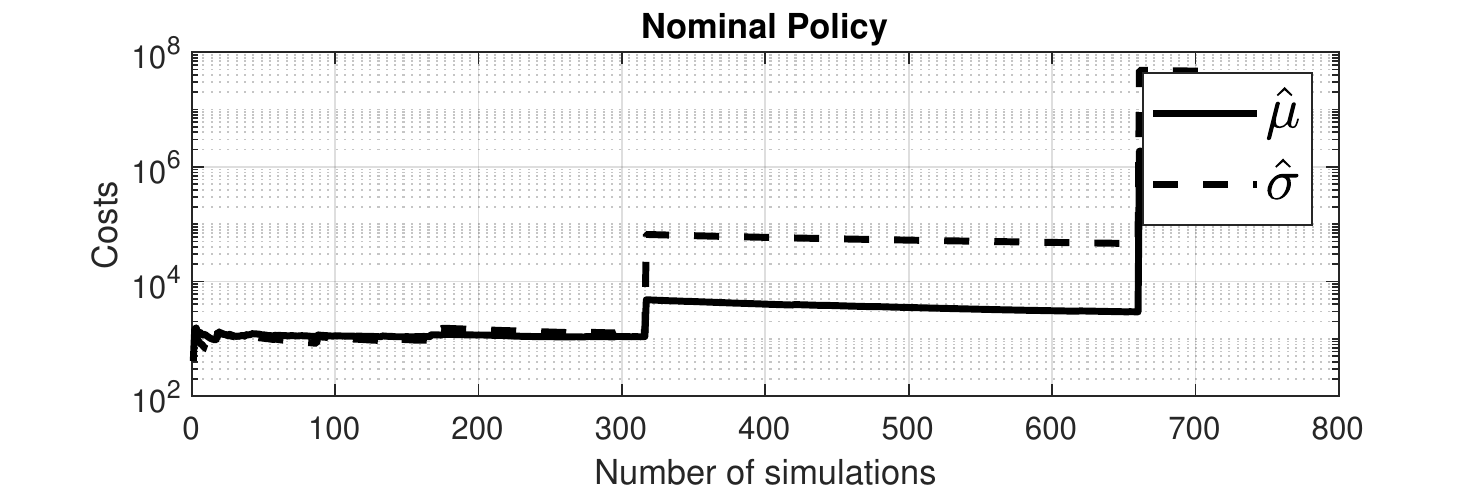}
    \includegraphics[width = \linewidth]{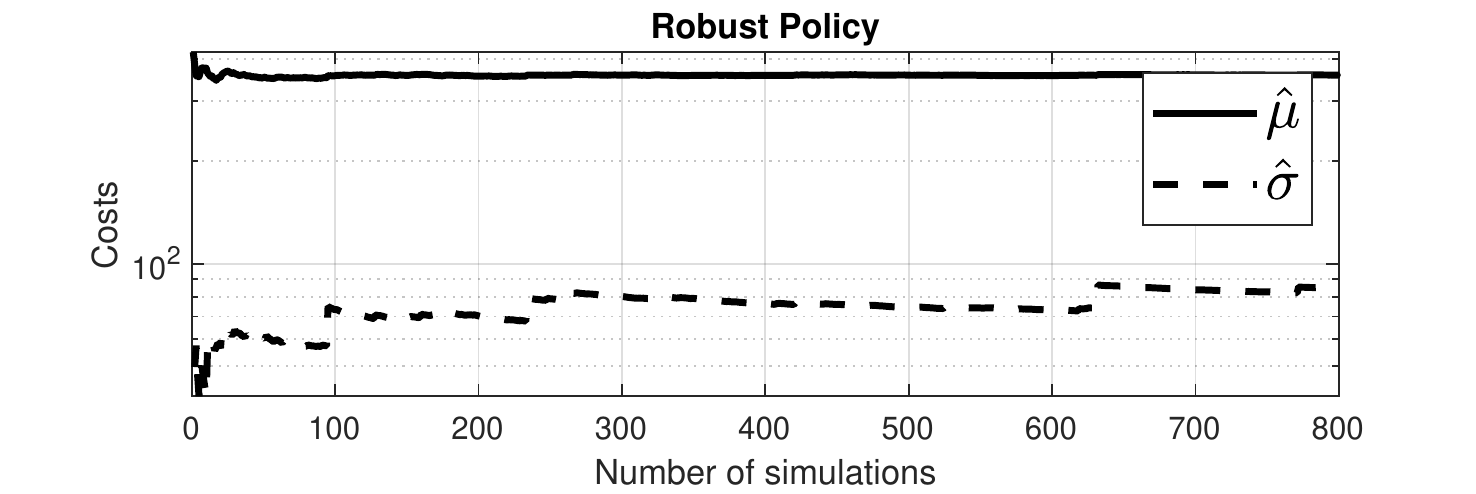}
    \vspace{-7mm}
    \caption{Running average costs $\hat{\mu}$ and standard deviation $\hat{\sigma}$ for the Monte Carlo simulations from Section \ref{sec:6} for the cost functional \eqref{eq:objective_MonteCarlo}. The upper plot shows the running average costs of nominal DDP over the number of simulations. The lower plot shows the running average costs of robust DDP.}
    \label{fig:RunningAverages}
    \vspace{-4mm}
\end{figure}

To analyze the difference in performance of robust DDP and nominal DDP, we visualize the trajectories from the Monte Carlo experiments in Figure \ref{fig:Trajectories1} and Figure~ \ref{fig:Trajectories2}. Considering particularly the trajectories highlighted in blue, we can see that the nominal policy makes many oscillations before stabilizing the pendulum at the top position. This strategy is sensitive to friction uncertainty, because the friction may dissipate energy from the system faster than it is supplied by the control input. The robust policy, on the other hand, does the swing up faster and leaves then some time for stabilizing the pendulum in the upright position. This strategy is more robust, because there is less time for dissipating energy on this trajectory and in the end, there is time for compensating the disturbances due to friction. What we see is that the changes of the robust strategy make intuitive sense. Moreover, we emphasize that not only the feedback term $\bK_t^2$, but the whole trajectory is changed due to the employment of robust control methods.


\begin{figure}
    \centering
    \includegraphics[width = \linewidth]{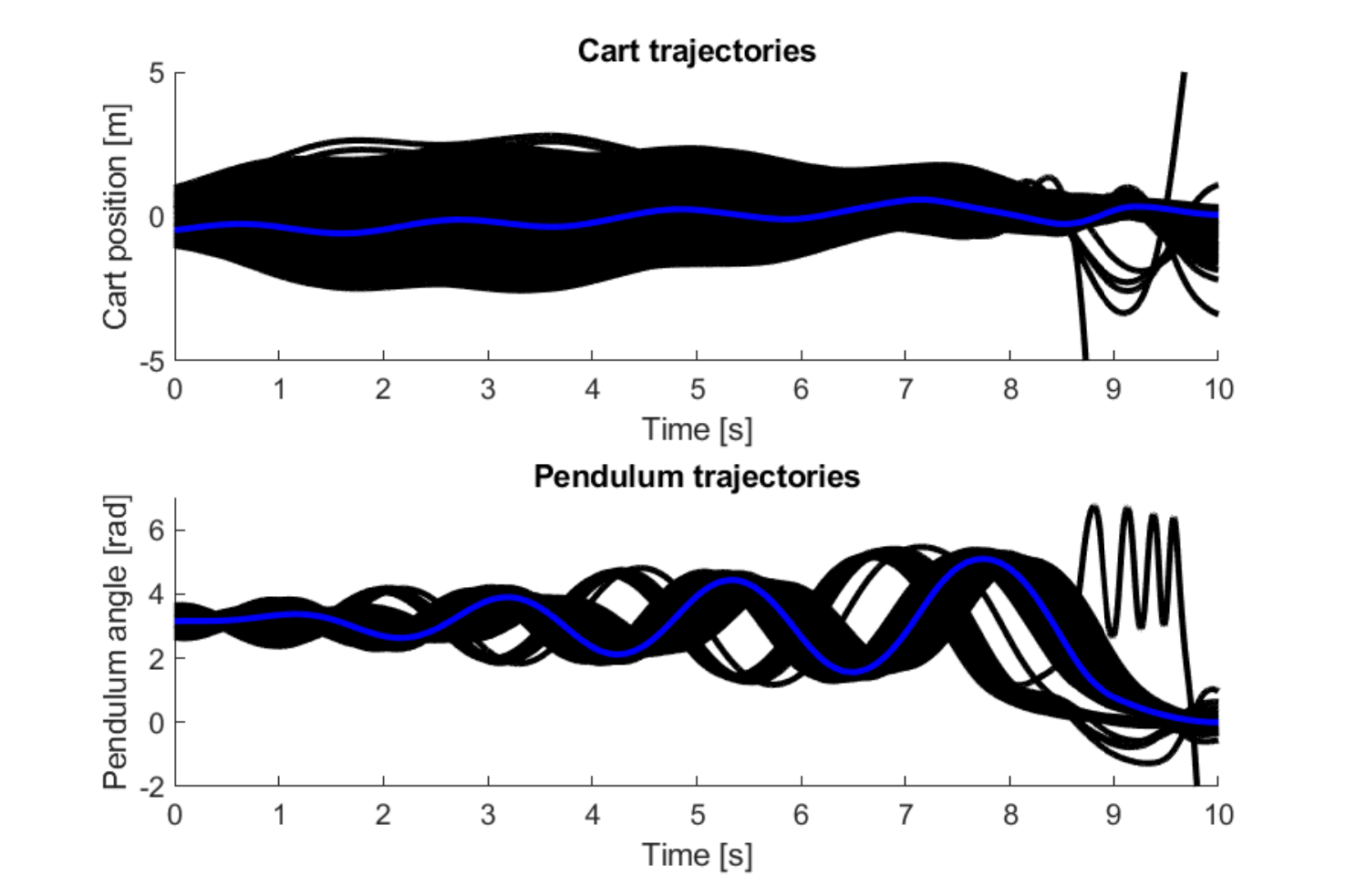}
    \vspace{-8mm}
    \caption{Cart trajectories $s(t)$ and pendulum trajectories $\theta (t)$ for the \textbf{nominal} DDP algorithm from the Monte Carlo simulation of Section \ref{sec:6}. A sample trajectory is highlighted in blue.}
    \label{fig:Trajectories1}
    \vspace{-5mm}
\end{figure}

\begin{figure}
    \centering
    \vspace{-0.5mm}
    \includegraphics[width = \linewidth]{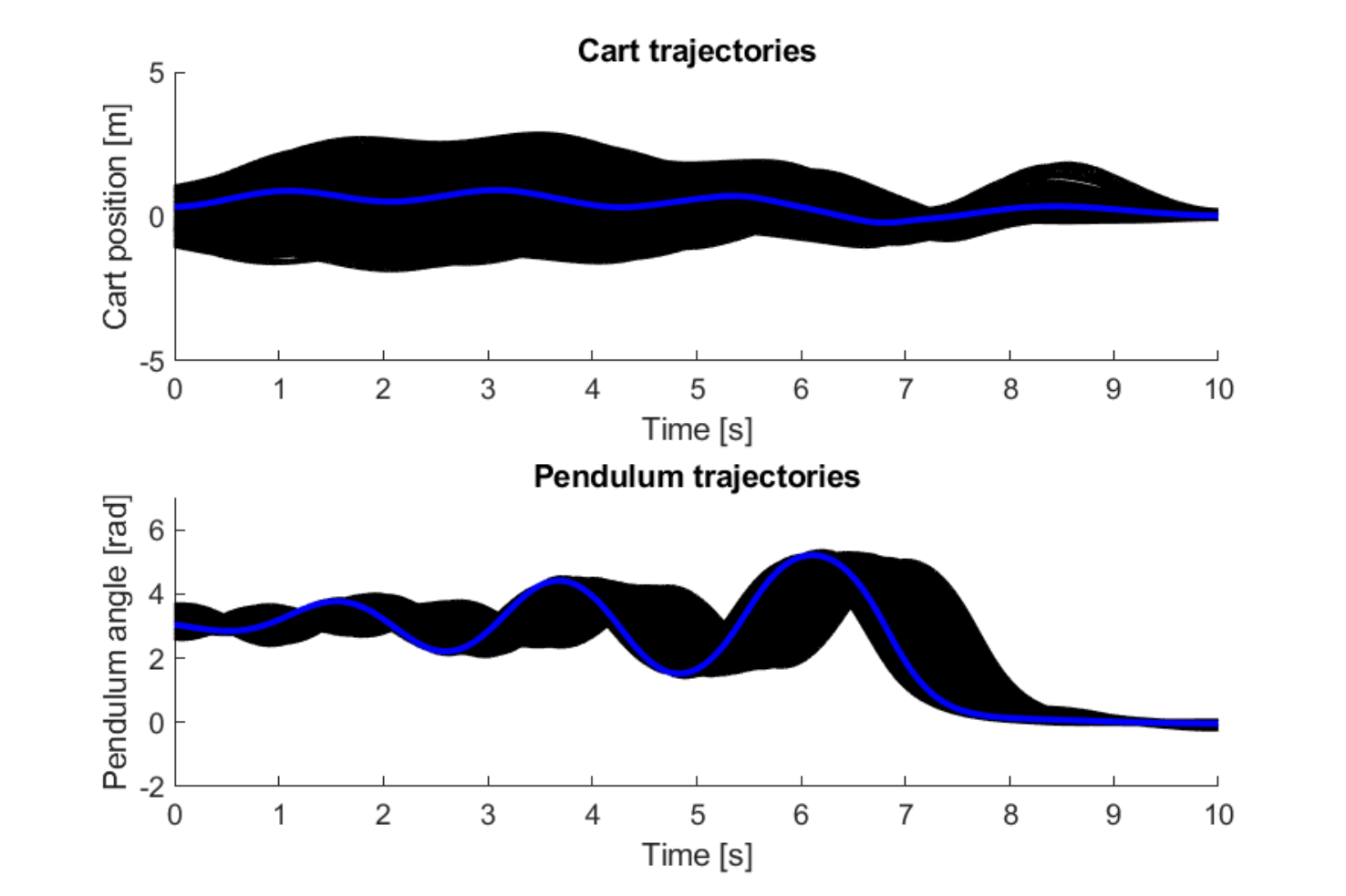}
    \vspace{-8mm}
    \caption{Cart trajectories $s(t)$ and pendulum trajectories $\theta (t)$ for the \textbf{robust} DDP algorithm from the Monte Carlo simulation of Section \ref{sec:6}. A sample trajectory is highlighted in blue.}
    \label{fig:Trajectories2}
    \vspace{-6mm}
\end{figure}

\section{Conclusion}
\label{sec:7}

We combine DDP with robust control and thereby address trajectory generation for LFTs (or nonlinear counterparts thereof). The maximization in the resulting minimax DDP procedure is handled by convex relaxations, such that our method is completely convex for linear systems and a sequential convex programming approach for nonlinear systems. Due to the derivation from DDP, we simultaneously optimize (robustly) over a feedforward term and a linear feedback term. Our numerical example shows that this method has the potential to increase the robust performance of trajectory generation. The ideas presented in the present work should be seen as a first step towards robust planning of feedforward strategies. This could be expanded by investigating IQCs from robust control or step size control and regularization techniques from DDP in the context of robust DDP.

\bibliographystyle{abbrv}
\bibliography{sources.bib}

\end{document}